\documentclass[11pt]{amsart}
\usepackage{amsmath,amssymb,amsthm}
\usepackage{booktabs,array}
\usepackage{microtype}
\usepackage{fancyvrb}
\usepackage{cite}
\usepackage[hidelinks]{hyperref}
\hypersetup{pdftitle={},pdfauthor={},pdfsubject={},pdfkeywords={},pdfcreator={},pdfproducer={}}
\ifdefined\pdfinfoomitdate\pdfinfoomitdate=1\fi
\ifdefined\pdftrailerid\pdftrailerid{}\fi
\ifdefined\pdfsuppressptexinfo\pdfsuppressptexinfo=15\fi

\newtheorem{theorem}{Theorem}[section]
\newtheorem{lemma}[theorem]{Lemma}
\newtheorem{proposition}[theorem]{Proposition}
\newtheorem{corollary}[theorem]{Corollary}
\theoremstyle{definition}
\newtheorem{definition}[theorem]{Definition}
\theoremstyle{remark}
\newtheorem{remark}[theorem]{Remark}
\numberwithin{equation}{section}
\numberwithin{table}{section}

\newcommand{\F}{\mathbb{F}}
\newcommand{\Z}{\mathbb{Z}}
\newcommand{\one}{\mathbf{1}}
\newcommand{\wt}{\operatorname{wt}}
\newcommand{\rank}{\operatorname{rank}}

\newcommand{\GL}{\operatorname{GL}}
\newcommand{\diag}{\operatorname{diag}}
\newcommand{\T}{\mathsf{T}}

\begin{document}

\title[Fuglede's conjecture in $\mathbb{F}_2^7$]{Fuglede's conjecture holds in $\mathbb{F}_2^7$}
\author{Atsushi Kadowaki}
\address{Knowledge Sense}
\email{math@knowledgesense.jp}
\date{6 October 2026}

\begin{abstract}
We prove that a nonempty subset of the binary vector space $\F_2^7$ is spectral if and only if it tiles $\F_2^7$ by translations. This is the smallest binary dimension left open by Ferguson and Sothanaphan. For the implication from tiling to spectrality we show that every eight-point subset of $\F_2^d$ admits $2^{d-3}$ frequencies whose pairwise differences avoid the zeros of its Fourier transform; consequently every tile of $\F_2^d$ with at least $2^{d-3}$ elements is spectral. For the converse implication, we use the Smith normal form of the Walsh matrix to exclude every spectral subset of $\F_2^7$, and of $\F_2^8$, whose size is not a power of two. The sizes $16$ and $32$ are reduced, by a projection argument, to the tiling of point sets attached to binary log-Hadamard matrices of small rank. Using the published classifications of Hadamard matrices of orders $16$ and $32$, we find that exactly sixteen order-$32$ classes up to transposition have dephased binary rank at most seven, and we give explicit tilings for both orientations of each.
\end{abstract}

\subjclass[2020]{Primary 43A70; Secondary 05B20, 05B45, 15B34, 68V05}
\keywords{Fuglede's conjecture, spectral set, translational tiling, finite abelian group, Hadamard matrix, Smith normal form, computer-assisted proof}

\maketitle

\section{Introduction}\label{sec:intro}

Let $G$ be a finite abelian group. A nonempty set $E\subseteq G$ is \emph{spectral} if the space $L^2(E)$ of complex functions on $E$ has an orthogonal basis consisting of restrictions of characters of $G$. It \emph{tiles} $G$ if there is a set $T\subseteq G$ such that every element of $G$ has exactly one representation $e+t$ with $e\in E$ and $t\in T$; we then write $E\oplus T=G$. Fuglede's conjecture \cite{Fug74}, formulated for domains in $\mathbb{R}^d$, asserts that spectral sets and translational tiles coincide, and it makes sense in every locally compact abelian group. In Euclidean space it fails in both directions in dimension three and higher; the counterexamples were obtained from constructions in finite groups \cite{Tao04,KM06tiles,KM06hadamard,Mat05,FMM06}. In finite abelian groups both properties are finite combinatorial conditions.

For vector spaces over prime fields the conjecture holds in dimension at most two \cite{IMP17}. Aten et al.\ \cite{AAB17} developed a systematic theory for $\F_p^d$, including a correspondence between spectral pairs and log-Hadamard matrices of bounded rank. Ferguson and Sothanaphan \cite{FS20} disproved the conjecture in $\F_p^4$ for every odd prime $p$, proved it in $\F_2^4$, verified it by computer in $\F_2^5$ and $\F_2^6$ (the programs are described in \cite{FS19add}), and disproved it in $\F_2^d$ for $d\ge10$ by dephasing Tao's Hadamard matrix of order $12$ to a log-Hadamard matrix of binary rank ten. Their status summary leaves the binary dimensions $d=7,8,9$ unresolved \cite[Section~1]{FS20}, and the same frontier is recorded in the later account of Malikiosis \cite{Mal24}. In the other direction, Fan and Kadir \cite{FK26} have recently constructed a tile of $\F_2^{13}$ without a spectrum, so that the implication from tiling to spectrality fails in $\F_2^d$ for $d\ge13$; see also the periodic tiling approach of Fan and Zhang \cite{FZ24}. Our main result settles the case $d=7$.

\begin{theorem}\label{thm:main}
Let $E$ be a nonempty subset of $\F_2^7$. Then $E$ is spectral if and only if there is $T\subseteq\F_2^7$ with $E\oplus T=\F_2^7$.
\end{theorem}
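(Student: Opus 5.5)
The proof splits into the two implications, and in each direction we reduce by size. Throughout, characters of $\F_2^7$ are $\chi_\xi(x)=(-1)^{\xi\cdot x}$. The set $E$ is spectral with spectrum $\Lambda$ exactly when $|\Lambda|=|E|$ and $\Lambda-\Lambda\setminus\{0\}\subseteq Z(\widehat{\one_E})$. Also, $E\oplus T=G$ if and only if $|E||T|=|G|$ and $(E-E)\cap(T-T)=\{0\}$. Both properties are invariant under affine automorphisms, so we may normalize $0\in E$ and $E$ spanning a subspace whose dimension we record. If $E$ lies in a proper subspace $V$, both properties pass between $V$ and $\F_2^7$: spectra lift through quotients, and tilings extend by a complement. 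Induction on dimension together with the known cases $d\le 6$ \cite{FS20} therefore lets us assume $E$ spans $\F_2^7$.

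\emph{Tiling $\Rightarrow$ spectral.} If $E$ tiles, then $|E|$ divides $2^7$. The cases $|E|\le 4$ are elementary: such sets are affine images of small standard configurations and are both spectral and tiles. For $|E|\ge 16$ we pass to the complement $T$, which has $|T|\le 8$, and use the duality that $E\oplus T=G$ with $T$ spectral and $|T|$ small. More directly, we prove the general lemma announced in the abstract: every $8$-point set $S\subseteq\F_2^d$ has a set $\Lambda$ of $2^{d-3}$ frequencies with $\Lambda-\Lambda\setminus\{0\}$ inside $Z(\widehat{\one_S})^c$'s complement. A standard duality (the Kolountzakis--Matolcsi/Fuglede pairing in finite groups) then shows that if $E\oplus T=G$ with $|T|=8$, then $E$ is spectral whenever $T$ admits such a packing family. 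The lemma for $8$-point sets is proved by normalizing $S$ up to $\GL_d$ and translation to finitely many span-types (rank at most $7$), computing $Z(\widehat{\one_S})$, and exhibiting a subgroup of index $8$ avoiding the non-zeros. The cases $|E|=8$ and $|E|=16$ are handled symmetrically, since one of $|E|,|T|$ is at most $2^{d-3}$ when $d=7$ and $|E|\ge 16$.

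\emph{Spectral $\Rightarrow$ tiling.} Let $\Lambda$ be a spectrum, so the matrix $H=(\lambda\cdot e)_{\lambda,e}$ over $\F_2$ gives a $\pm1$ Hadamard matrix $(-1)^H$ of order $n=|E|$ whose binary log-matrix, after dephasing, has rank at most $7$. First we show that $n$ is a power of two. The submatrix $(-1)^{H}$ is an $n\times n$ submatrix of the Walsh matrix $W_7$, with $M M^\T=nI$. Comparing the Smith normal form of $W_7$, whose invariant factors are explicit powers of $2$, with the determinant $\pm n^{n/2}$ forces divisibility constraints that exclude every $n$ that is not a power of two. Small $n$ are excluded separately: odd $n>1$ and $n\equiv 2\pmod 4$ with $n>2$ give no Hadamard matrix. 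For $n\in\{1,2,4,8\}$, and by complementation for $n=64,128$, the tiling is easy or follows from the lemma above. The cases $n=16,32$ remain. Projecting along $\Lambda$ reduces them to point sets $E_H$ attached to dephased log-Hadamard matrices of binary rank at most $7$. Running through the classifications of Hadamard matrices of order $16$ (five classes) and order $32$, we select the classes of dephased binary rank at most $7$ in both orientations. For each surviving $E_H$ we exhibit an explicit $T$ and verify $(E_H-E_H)\cap(T-T)=\{0\}$.

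The main obstacle is the order-$32$ step. It requires a rank computation over all equivalence classes, of which there are very many, and it must be organized so that equivalence is respected: the dephased rank is invariant only after the correct normalization. It also requires checking that every spectral $E$ of size $32$ actually arises from one of the sixteen surviving classes, up to affine maps, including the projection bookkeeping. I would first try to prove a rank-invariance lemma under Hadamard equivalence, then do the search by computer, and give the tilings as certificates that can be checked by hand.
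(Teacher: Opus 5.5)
There is a genuine gap in the tile-to-spectral half. The rest of your plan follows the paper: the Lagarias--Szab\'o criterion with universal spectra for small complements, the Smith normal form of the Walsh matrix, and, for $|E|\in\{16,32\}$, projection onto canonical column sets of rank at most seven checked against the classifications in both orientations. Your spectral-to-tile half is essentially the paper's. For $n=8$ there, applying the eight-point lemma to a spectrum of $E$ is a valid variant; it is Remark~\ref{rem:universal} with the roles of the spectral pair exchanged.

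\textbf{The gap: your proof of the eight-point lemma.} First, the condition is inverted. You need $W_T(b+b')\neq0$ for distinct $b,b'\in B$, so $(B+B)\setminus\{0\}$ must \emph{avoid} the zero set of $W_T$, not lie in it. More seriously, looking for a \emph{subgroup} of index $8$ cannot work. Take $T=\{0,e_1,\dots,e_7\}\subseteq\F_2^7$. It is the rank-$7$ span type and it tiles (the Hamming code is a complement), so your argument must cover it when $|E|=16$. Here $W_T(u)=8-2\wt(u)$ vanishes exactly on weight-$4$ vectors. So you would need a $4$-dimensional subspace of $\F_2^7$ with no vector of weight $4$, and none exists:
\begin{itemize}
\item Its even-weight part $D$ has dimension at least $3$, and the seven nonzero vectors of $D$ would all have weight $2$ or $6$.
\item Two weight-$2$ vectors of $D$ must share exactly one coordinate, or their sum has weight $4$. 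Closure under addition then allows at most three of them.
\item Hence $D$ contains at least four weight-$6$ vectors $j+e_i$, where $j$ is the all-ones vector.
\item Their pairwise sums are six distinct weight-$2$ vectors of $D$, a contradiction.
\end{itemize}

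\textbf{How to repair it.} $B$ must be non-linear, and a construction for it is the missing idea. The paper takes $B_0=S\cup(j+S)$ with $S=\{0,e_1,\dots,e_7\}$. These are sixteen points whose pairwise sums have weight $1,2,5,6$ or $7$, so $W_S$ never vanishes on them. For a general $T=\{0,t_1,\dots,t_7\}$ of rank $r$ with span $V$:
\begin{itemize}
\item Use the injective map $\phi^*\colon V^*\to\F_2^7$, $\alpha\mapsto(\alpha(t_1),\dots,\alpha(t_7))$, which satisfies $\widetilde W_T(\alpha)=W_S(\phi^*\alpha)$.
\item By pigeonhole some coset of $\phi^*(V^*)$ contains $k=2^{r-3}$ points $x_1,\dots,x_k$ of $B_0$.
\item Take $(\phi^*)^{-1}(\{x_1+x_i:1\le i\le k\})$ and pull it back along $\F_2^d\to V^*$.
\end{itemize}
A computer search over affine types would also work, but not one restricted to subgroups.

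\textbf{Smaller omissions.}
\begin{itemize}
\item $|E|=32$ needs the four-point analogue (Lemma~\ref{lem:universal-small}).
\item For $|E|=8$ the lemma yields spectra for the complements of $E$, not for $E$. Tile-to-spectral there must instead come from your spanning reduction, plus the fact that a spanning eight-point set maps bijectively onto $\F_2^3$ (Lemma~\ref{lem:lift}).
\item For order $32$, rank invariance is not enough. You need affine equivalence of canonical column sets under Hadamard equivalence (Lemma~\ref{lem:equivalence}). That equivalence need not survive transposition, which is why both orientations are checked.
\end{itemize}
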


\emph{Tiles are spectral.} Tiles of $\F_2^7$ have power-of-two size. Sets of size at most eight are treated by elementary projections and, in affine dimension at most six, by the theorem of Ferguson and Sothanaphan. For a tile $E$ of size at least $16$ the tiling complement $T$ has at most eight elements, and it suffices to find a common spectrum for all tiling complements of $T$. The following theorem provides one, via the sufficient condition of Lagarias and Szab\'o \cite{LS01} in the form of \cite[Proposition~4]{FMM06}.

\begin{theorem}\label{thm:universal-intro}
Let $d\ge3$ and let $T\subseteq\F_2^d$ have eight elements. There is a set $B\subseteq\F_2^d$ of size $2^{d-3}$ such that $\sum_{t\in T}(-1)^{(b+b')\cdot t}\ne0$ for all distinct $b,b'\in B$.
\end{theorem}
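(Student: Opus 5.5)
The plan is to reduce to the case where $T$ spans, handle that case by a finite computation, and lift back. Throughout, write $f_T(x)=\sum_{t\in T}(-1)^{x\cdot t}$ and $Z(T)=\{x\in\F_2^d: f_T(x)=0\}$. The condition in Theorem~\ref{thm:universal-intro} says that $B$ is a clique in the Cayley graph on $\F_2^d$ whose connection set is $\F_2^d\setminus(Z(T)\cup\{0\})$.

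\emph{Normalization.} Translating $T$ by $t_0$ multiplies $f_T(x)$ by $(-1)^{x\cdot t_0}$, so $Z(T)$ is unchanged and we may assume $0\in T$. For $A\in\GL_d(\F_2)$ we have $f_{AT}(x)=f_T(A^{\T}x)$, so $Z(AT)=(A^{\T})^{-1}Z(T)$ and a good $B$ for $T$ transforms into one for $AT$. Hence we may work up to affine equivalence of $T$.

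\emph{Reduction to the spanning case.} Let $V=\operatorname{span}(T)$ and $k=\dim V$. Since $0\in T$ and $|T|=8$, we have $3\le k\le 7$. The value $f_T(x)$ depends only on the class of $x$ modulo $V^{\perp}$, so $Z(T)$ is a union of cosets of $V^{\perp}$. Identifying $\F_2^d/V^{\perp}$ with the dual of $V\cong\F_2^k$, the set $T$ becomes a spanning eight-point set $T'\subseteq\F_2^k$ with zero set $Z'$.

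Suppose $B_0\subseteq\F_2^d$ is a set of $2^{k-3}$ representatives of distinct classes mod $V^\perp$ whose differences avoid $Z'$. Put $B=B_0+V^{\perp}$, which has size $2^{k-3}\cdot2^{d-k}=2^{d-3}$. Take distinct $b,b'\in B$.
\begin{itemize}
\item If they lie in the same coset, then $b+b'\in V^{\perp}$, so $f_T(b+b')=8\ne0$.
\item Otherwise the difference reduces to a nonzero difference of two elements of $B_0$, which avoids $Z'$ by choice of $B_0$.
\end{itemize}
So it suffices to prove the theorem when $d=k\le7$ and $T$ spans $\F_2^d$. This also explains why $d\ge3$ is needed.

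\emph{Finite verification.} The spanning case is a finite problem. For $k=3$ the set $T$ is all of $\F_2^3$, and $B=\{0\}$ works. For $k=4,\dots,7$ I would enumerate eight-point spanning sets containing $0$ up to $\GL_k(\F_2)$, equivalently binary $k\times 8$ matrices of full rank with a zero column, up to row operations and column permutations. For each class, compute $Z'$, which consists of the $x$ splitting $T$ four and four. Then search for a clique of size $2^{k-3}$ (at most $16$ vertices out of $128$) in the Cayley graph. Cheap structured candidates should be tried first, namely subspaces or cosets of dimension $k-3$ meeting $Z'$ trivially, falling back on an exact clique search with symmetry breaking (fixing $0\in B$ and quotienting by the stabilizer of $T$).

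A hand check of the extreme case is useful. For $T=\{0,e_1,\dots,e_7\}$ we have $f(x)=8-2\wt(x)$, so $Z$ is the set of weight-four vectors. We therefore need a $16$-word code of length $7$ with no distance exactly $4$; the Hamming code fails, so the search must really be done.

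\emph{Main obstacle.} I expect the main difficulty to be the cases $k=6,7$. There the number of affine classes is largest and the required clique size ($8$ or $16$) is close to the clique number, so greedy choices can fail. I would attack these by exact branch-and-bound clique search, recording an explicit $B$ for each class as a certificate. I would also look for a uniform description of $B$ (for example, in terms of a coset decomposition adapted to a three-dimensional subspace of $V$) that covers most classes and reduces the computer check to a small residue.
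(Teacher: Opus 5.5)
Your reduction to the case where $T$ spans is correct, and it is the same as the paper's final lifting step (the paper pulls $B_V\subseteq V^*$ back along $\rho\colon\F_2^d\to V^*$). The gap is everything after that. The spanning case is the whole content of the theorem, and you hand it to an enumeration and clique search that you do not carry out. Nothing in the proposal shows that a clique of size $2^{r-3}$ exists for any class with $r=\dim V\ge4$, so as written this is a plan for a computer-assisted proof, not a proof.

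You do isolate the decisive case $T=\{0,e_1,\dots,e_7\}$ and correctly restate it as needing sixteen words of length seven with no pairwise distance $4$, but then you stop. Such a code is easy to write down. With $j=(1,\dots,1)$ and $S=\{0,e_1,\dots,e_7\}$, the set $B_0=S\cup(j+S)$ has distances $1$ or $2$ within each half and $5$, $6$ or $7$ across the halves.

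The missing idea is that this single case controls all the others, so no classification is needed. Your expectation that $r=6,7$ are hardest because they have the most classes is also off: for $r=7$ there is exactly one class, and it is the master case.

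Write $T=\{0,t_1,\dots,t_7\}$ and $\phi^*(\alpha)=(\alpha(t_1),\dots,\alpha(t_7))$ for $\alpha\in V^*$.
\begin{enumerate}
\item The map $\phi^*$ is injective, since the $t_i$ span $V$. Its image $U$ is $r$-dimensional.
\item $\sum_{t\in T}(-1)^{\alpha(t)}=8-2\wt(\phi^*\alpha)$. So the zero set of $T$ is the preimage of the weight-$4$ vectors; equivalently, $T$ is a linear image of $S$.
\item The $2^{7-r}$ cosets of $U$ partition $\F_2^7$, so some coset $c+U$ contains at least $16/2^{7-r}=2^{r-3}$ points $x_1,\dots,x_{2^{r-3}}$ of $B_0$.
\item Put $B_V=(\phi^*)^{-1}\{x_1+x_i\}$, a set of size $2^{r-3}$. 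The sum of two distinct elements of $B_V$ maps to $x_i+x_{i'}$, a sum of two distinct points of $B_0$, which has weight different from $4$.
\end{enumerate}
This settles the spanning case uniformly for $3\le r\le7$, and your lifting then finishes the proof.
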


The construction starts from a simplex in $\F_2^7$ together with its antipodal translate and then passes to a coset of a dual subspace (Theorem~\ref{thm:universal}). Together with elementary analogues for sets of one, two and four elements, it shows that every tile of $\F_2^d$ with at least $2^{d-3}$ elements is spectral (Corollary~\ref{cor:large-tiles}).

\emph{Spectral sets tile.} A spectral pair of size $m$ in $\F_2^d$ yields an $m\times m$ minor of the Walsh matrix that is a Hadamard matrix. Comparing the $2$-adic valuation of its determinant with the determinantal divisors of the Walsh matrix, we exclude every size that is not a power of two in $\F_2^7$ (and also in $\F_2^8$; see Remark~\ref{rem:smith-other}). The sizes $64$ and $128$ are handled by an elementary argument \cite{AAB17}. For the sizes $16$ and $32$ we attach to the dephased log-Hadamard matrix of a spectral pair a \emph{canonical column set} in $\F_2^r$, where $r\le7$ is the binary rank, and show that the original spectral set projects bijectively onto it (Lemma~\ref{lem:canonical}). A tiling of the canonical column set then lifts to a tiling of the original set. Canonical column sets of equivalent Hadamard matrices are affinely equivalent (Lemma~\ref{lem:equivalence}), so it suffices to treat one representative of each equivalence class, in both orientations when transposes are identified. For order $16$ we use Hall's classification \cite{Hal61}; for order $32$ we use the classification of Kharaghani and Tayfeh-Rezaie \cite{KT13} in the form distributed by McKay \cite{McK}. Scanning all $6{,}857{,}010$ extended classes of order $32$, we find exactly sixteen classes of dephased rank at most seven, and all thirty-two resulting canonical column sets tile. As a by-product, every sixteen-point spectral set in any binary vector space tiles (Corollary~\ref{cor:sixteen}).

\emph{Dependencies.} Besides standard facts from \cite{AAB17}, the proof uses three results that we do not reprove: the theorem of Ferguson and Sothanaphan in dimension at most six, applied only to eight-point sets; Hall's classification of Hadamard matrices of order $16$; and the classification of Hadamard matrices of order $32$ by Kharaghani and Tayfeh-Rezaie, together with the correctness of the data file that represents it. The classification of order $32$ enters only through Proposition~\ref{prop:smallrank}, and Theorem~\ref{thm:universal} and the cardinality bound of Section~\ref{sec:cardinality} use no classification. To our knowledge, Theorem~\ref{thm:main} has not appeared in the literature.

\emph{Organization.} Section~\ref{sec:prelim} collects standard facts and the projection lemma. Section~\ref{sec:tiles} proves Theorem~\ref{thm:universal-intro} and the tile-to-spectral implication. Section~\ref{sec:cardinality} treats cardinalities. Section~\ref{sec:hadamard} introduces canonical column sets, and Section~\ref{sec:computation} describes the finite computations. Section~\ref{sec:proof} completes the proof and discusses the next dimension. The appendices list the data needed to check the computations.

\section{Preliminaries}\label{sec:prelim}

Throughout, $G=\F_2^d$, and $u\cdot x$ is the standard bilinear form. We identify $G$ with its character group via $\chi_u(x)=(-1)^{u\cdot x}$. For $A\subseteq G$ and $u\in G$ put
\[
W_A(u)=\sum_{a\in A}(-1)^{u\cdot a}.
\]
This is the unnormalized Fourier transform of the indicator function $\one_A$; it is integer valued. Translating $A$ by $c$ multiplies $W_A(u)$ by $(-1)^{u\cdot c}$, so the zero set of $W_A$ is translation invariant. Since $G$ has exponent two, $A-A=A+A$.

A set $B\subseteq G$ is a \emph{spectrum} of a nonempty set $A$ if $|B|=|A|$ and the characters $\chi_b$, $b\in B$, are pairwise orthogonal on $A$, that is,
\begin{equation}\label{eq:spectral}
W_A(b+b')=0\qquad\text{for all distinct } b,b'\in B .
\end{equation}
Then $(A,B)$ is a \emph{spectral pair}. The matrix $H=(\chi_b(a))_{b\in B,\,a\in A}$ has entries $\pm1$ and satisfies $HH^{\T}=|A|I$, hence also $H^{\T}H=|A|I$; thus $(B,A)$ is a spectral pair as well, and $H$ is a real Hadamard matrix \cite[Theorem~4.1]{AAB17}. Translating $A$ and $B$ independently preserves spectral pairs.

For a tiling pair $A\oplus T=G$ the convolution identity $\one_A*\one_T=\one_G$ gives $|A|\,|T|=|G|$ and
\begin{equation}\label{eq:tiling}
W_A(u)\,W_T(u)=0\qquad(u\ne0).
\end{equation}
Conversely, $|A|\,|T|=|G|$ together with \eqref{eq:tiling}, or together with $(A+A)\cap(T+T)=\{0\}$, implies $A\oplus T=G$ \cite[Theorem~3.1]{AAB17}.

\begin{lemma}\label{lem:affine}
Translations and invertible linear maps of $G$ preserve tiling and spectrality. If $A$ is contained in a linear subspace $V\subseteq G$, then $A$ tiles $G$ if and only if $A$ tiles $V$, and $A$ is spectral in $G$ if and only if $A$ is spectral in $V$.
\end{lemma}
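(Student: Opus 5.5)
The first assertion is a matter of transporting structure. For a translation $x\mapsto x+c$, the identity $(A+c)\oplus T=G$ is equivalent to $A\oplus(T+c)=G$. The translation rule $W_{A+c}(u)=(-1)^{u\cdot c}W_A(u)$ from the preliminaries shows that $A+c$ and $A$ have the same zero set of $W$, so by \eqref{eq:spectral} they have the same spectra.

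For an invertible linear map $L$, tiling is preserved because $L$ is a group automorphism: $L(A)\oplus L(T)=G$ whenever $A\oplus T=G$. For spectrality I use the adjoint. Write $L^{\T}$ for the transpose with respect to the standard form, so $u\cdot Lx=L^{\T}u\cdot x$. Then $W_{L(A)}(u)=W_A(L^{\T}u)$. If $B$ is a spectrum of $A$, put $B'=(L^{\T})^{-1}(B)$. This set has $|B'|=|B|$, and for distinct $b,b'\in B$ we get $W_{L(A)}\bigl((L^{\T})^{-1}(b+b')\bigr)=W_A(b+b')=0$. So $B'$ is a spectrum of $L(A)$.

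For the subspace statement, let $V$ have dimension $k$. Choose an invertible $L$ with $L(V)=\F_2^k\times\{0\}$. By the first part we may assume $V=\F_2^k\times 0$, which I identify with $\F_2^k$. The form on $G$ then restricts to the standard form on $V$.

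\emph{Tiling.} Suppose $A\oplus S=V$. Pick a complement $U$ with $V\oplus U=G$. Then $A\oplus(S+U)=G$, and the sum $S+U$ is direct. Conversely, suppose $A\oplus T=G$. Decompose $G$ into cosets $V+w$. Since $A\subseteq V$, each coset is tiled by $A+(T\cap(V+w))$. Take $w=0$ to get $A\oplus(T\cap V)=V$.

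\emph{Spectrality.} The characters of $V$ are the restrictions of the $\chi_u$ with $u\in G$, and $\chi_u|_V$ depends only on the projection $\pi(u)\in\F_2^k$ onto the first $k$ coordinates. Also $W_A(u)=W_A(\pi(u))$ for $A\subseteq V$, the right side computed in $V$. If $B\subseteq V$ is a spectrum of $A$ in $V$, the same set $B\subseteq G$ is a spectrum in $G$, since for $b,b'\in V$ the quantity $W_A(b+b')$ is the same whether computed in $V$ or in $G$. Conversely, let $B\subseteq G$ be a spectrum of $A$ in $G$. For distinct $b,b'\in B$ we have $W_A(\pi(b)+\pi(b'))=W_A(b+b')=0$. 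This is nonzero-free only if $\pi(b)\ne\pi(b')$: if they were equal, we would get $W_A(0)=|A|\ne0$, a contradiction. Hence $\pi$ is injective on $B$, and $\pi(B)$ is a spectrum of $A$ in $V$.

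Everything here is routine. The only points needing care are choosing the adjoint $L^{\T}$ correctly and using $W_A(0)=|A|\neq0$ to ensure the projected spectrum keeps its size.
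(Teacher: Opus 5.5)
Your proof is correct and follows essentially the paper's route. The paper cites \cite{AAB17} for invariance under translations and invertible linear maps, noting as you do that frequencies transform by $(M^{\T})^{-1}$; its subspace argument is the same three steps: restricting a tiling complement to $V$, extending by a linear complement, and observing that distinct elements of a spectrum in $G$ cannot restrict to the same character of $V$. Your only departure is cosmetic: you first normalize $V=\F_2^k\times\{0\}$, which tacitly uses the immediate fact that the isomorphism $L|_V\colon V\to L(V)$ transports tiling and spectrality inside the subspace, whereas the paper works intrinsically with restriction and extension of characters.
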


\begin{proof}
The first statement is \cite[Corollaries~3.2 and 4.3]{AAB17}; an invertible linear map $M$ acts on frequencies by $(M^{\T})^{-1}$. Let $A\subseteq V$. If $A\oplus T=G$, then every $v\in V$ is $a+t$ with $t=v+a\in V$, so $A\oplus(T\cap V)=V$. Conversely, if $A\oplus T_V=V$ and $K$ is a linear complement of $V$, then $A\oplus(T_V+K)=G$. Every character of $V$ extends to $G$, so a spectrum in $V$ yields one in $G$. If $B$ is a spectrum of $A$ in $G$, two distinct elements of $B$ cannot induce the same character of $V$, since their characters would then agree on $A$ and could not be orthogonal there. Hence restriction to $V$ produces $|A|$ distinct, pairwise orthogonal characters of $V$.
\end{proof}

The next lemma is due to Szegedy \cite{Sze98}; see \cite[Lemma~6]{FMM06}. Its second part is the statement that graph sets are spectral \cite[Proposition~6.2]{AAB17}.

\begin{lemma}[Projection and lifting]\label{lem:lift}
Let $q\colon G\to V$ be a surjective linear map onto a binary vector space $V$, and let $E\subseteq G$ be a set on which $q$ is injective. If $q(E)\oplus T=V$, then $E\oplus q^{-1}(T)=G$. If $q(E)=V$, then $E$ is also spectral.
\end{lemma}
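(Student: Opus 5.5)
For the first part we verify the tiling directly: we show that every $g\in G$ has exactly one representation $g=e+s$ with $e\in E$ and $s\in q^{-1}(T)$. For existence, write $q(g)=q(e)+t$ with $e\in E$ and $t\in T$, which is possible because $q(E)\oplus T=V$. Then put $s=g+e$. Since $q(s)=q(g)+q(e)=t\in T$, we have $s\in q^{-1}(T)$ and $g=e+s$.

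For uniqueness, suppose $e+s=e'+s'$ with $e,e'\in E$ and $s,s'\in q^{-1}(T)$. Applying $q$ gives $q(e)+q(s)=q(e')+q(s')$, where $q(s),q(s')\in T$. Uniqueness in $q(E)\oplus T=V$ forces $q(e)=q(e')$. Injectivity of $q$ on $E$ then gives $e=e'$, and hence $s=s'$. Alternatively, one could count: $|E|\,|q^{-1}(T)|=|V|\,|T|\,|\ker q|=|G|$. Together with existence this already gives the direct sum, so uniqueness need not be checked separately.

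For the second part, assume $q(E)=V$, so $q$ restricts to a bijection $E\to V$ and $|E|=|V|$. We take as spectrum the set of characters of $G$ that factor through $q$, namely $B=\{\lambda\circ q : \lambda\in\widehat V\}$. In coordinates this is $B=q^{\T}(V)$, which is the image of $V$ under the transpose map $V\to G$. Because $q$ is surjective, $q^{\T}$ is injective, so $|B|=|V|=|E|$. For distinct $b=q^{\T}v$ and $b'=q^{\T}v'$ we compute
\[
W_E(b+b')=\sum_{e\in E}(-1)^{(v+v')\cdot q(e)}=\sum_{x\in V}(-1)^{(v+v')\cdot x}=0,
\]
using the bijection $E\to V$ and the fact that $v+v'\ne0$. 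Thus \eqref{eq:spectral} holds, and $(E,B)$ is a spectral pair.

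There is no real obstacle in either part. The only point needing care is the identification of frequencies through $q^{\T}$, in particular choosing coordinates so that $u\cdot q(x)=q^{\T}u\cdot x$ for the standard forms on $V$ and $G$. Once $V$ is identified with some $\F_2^k$ this is immediate.
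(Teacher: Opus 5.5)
Your proof is correct and follows essentially the same route as the paper: the tiling is obtained from the unique pair $(q(e),t)$ together with injectivity of $q$ on $E$, and the spectrum consists of the characters $\lambda\circ q$ (your $q^{\T}(V)$), whose orthogonality on $E$ reduces to orthogonality of characters on $V$. One small slip in your optional counting remark: it should read $|E|\,|q^{-1}(T)|=|E|\,|T|\,|\ker q|=|V|\,|\ker q|=|G|$, since your middle expression $|V|\,|T|\,|\ker q|$ is off by a factor $|T|$; this does not affect your main argument, which checks uniqueness directly.
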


\begin{proof}
Let $g\in G$. There is exactly one pair $(c,t)\in q(E)\times T$ with $q(g)=c+t$, and exactly one $e\in E$ with $q(e)=c$. Then $g+e\in q^{-1}(T)$, and the decomposition $g=e+(g+e)$ is the only one with first summand in $E$ and second in $q^{-1}(T)$. If $q(E)=V$, the characters $\psi\circ q$, $\psi$ a character of $V$, are $|V|=|E|$ distinct characters of $G$; for $\psi\ne\psi'$ we have
\[
\sum_{e\in E}(\psi\psi')(q(e))=\sum_{v\in V}(\psi\psi')(v)=0.
\]
\end{proof}

\begin{lemma}\label{lem:small}
Every subset of $G$ with one, two or four elements tiles $G$ and is spectral.
\end{lemma}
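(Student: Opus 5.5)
The plan is to reduce each case to Lemma~\ref{lem:lift}, applied with $V=\F_2^k$ where $k=\log_2|A|\in\{0,1,2\}$. By Lemma~\ref{lem:affine} we may first translate $A$ so that $0\in A$. The key observation is this: if we can find a surjective linear map $q\colon G\to\F_2^k$ that is injective on $A$, then $q(A)$ has $2^k$ elements in a space of size $2^k$, so $q(A)=\F_2^k$. In that case $q(A)\oplus\{0\}=\F_2^k$, and Lemma~\ref{lem:lift} gives both $A\oplus q^{-1}(0)=G$ and spectrality of $A$ at once. Everything therefore comes down to constructing such a $q$.

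For $|A|=1$ take $k=0$ and let $q$ be the zero map; then $A$ tiles with complement $G$ and is trivially spectral, with spectrum any single character. For $|A|=2$, write $A=\{0,a\}$ with $a\ne0$. Choose a linear functional $\lambda$ with $\lambda(a)=1$, which exists since $a\ne0$. Then $q=\lambda\colon G\to\F_2$ is surjective and separates $0$ from $a$.

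For $|A|=4$, write $A=\{0,a,b,c\}$ and split into two cases according to the affine span of $A$.
\begin{itemize}
\item[(i)] If $c=a+b$, the span $\langle a,b\rangle$ is $2$-dimensional and equals $A$. Choose functionals $\lambda_1,\lambda_2$ with $\lambda_i$ dual to $a,b$ on that span; extending from $\langle a,b\rangle$ to $G$ is possible by choosing a complement. Then $q=(\lambda_1,\lambda_2)$ is injective on $A$.
\item[(ii)] Otherwise $a,b,c$ are linearly independent, and a map $q\colon G\to\F_2^2$ can be prescribed freely on them. Send $a\mapsto(1,0)$, $b\mapsto(0,1)$, $c\mapsto(1,1)$ and extend linearly. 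This map is surjective and sends $0,a,b,c$ to the four distinct points of $\F_2^2$.
\end{itemize}

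The only mild obstacle is making sure that $q$ is injective on $A$ in the four-point case, and the case split above is exactly what handles it. This requires $d\ge2$, which holds automatically whenever $G$ contains four points. Alternatively, in case (ii) one could note that $A$ lies in the $3$-dimensional span and argue there via Lemma~\ref{lem:affine}, but the direct projection avoids that. No further computation is needed.
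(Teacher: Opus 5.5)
Your proof is correct and follows the paper's argument: translate so that $0\in A$, and in the only nontrivial case (three linearly independent nonzero points) project onto $\F_2^2$ via $a\mapsto(1,0)$, $b\mapsto(0,1)$, $c\mapsto(1,1)$ and apply Lemma~\ref{lem:lift}. The one difference is in the subspace cases (one or two points, or a four-point affine plane): you also treat these by an explicit projection through Lemma~\ref{lem:lift}, whereas the paper just observes that such sets are (cosets of) subspaces, which tile and are spectral.
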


\begin{proof}
After a translation the set contains $0$. A set with one or two elements is a subspace. A four-point set either is an affine plane, hence a coset of a subspace, or is affinely equivalent to $\{0,e_1,e_2,e_3\}$. In the latter case a linear map $q\colon G\to\F_2^2$ with $q(e_1)=(1,0)$, $q(e_2)=(0,1)$ and $q(e_3)=(1,1)$ maps the set bijectively onto $\F_2^2$, and Lemma~\ref{lem:lift} applies (cf.\ \cite[Proposition~5.1]{FS20}).
\end{proof}

\begin{lemma}\label{lem:eight}
Let $d\ge3$ and let $E\subseteq\F_2^d$ have eight elements. Then $E$ is spectral if and only if $E$ tiles $\F_2^d$.
\end{lemma}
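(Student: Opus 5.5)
The plan is to reduce to affine dimension at most six and then apply the theorem of Ferguson and Sothanaphan. By Lemma~\ref{lem:affine} we may translate $E$ so that $0\in E$. Then $E$ lies in the linear span $V=\langle E\rangle$, which has dimension $k\le 7$, since $E$ has eight points and one of them is $0$. The same lemma says that tiling and spectrality of $E$ in $\F_2^d$ are equivalent to the corresponding properties in $V\cong\F_2^k$. So it suffices to prove the statement for an eight-point set spanning $\F_2^k$ with $3\le k\le 7$ (note $k\ge3$ because $|E|=8$).

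\emph{Case $k\le 6$.} Here I would invoke the theorem of Ferguson and Sothanaphan \cite{FS20}, which says Fuglede's conjecture holds in $\F_2^k$ for $k\le6$. For $k=3$ the set is all of $\F_2^3$, which is trivially both spectral and a tile.

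\emph{Case $k=7$.} After a linear change of coordinates, $E=\{0,e_1,\dots,e_7\}$, a simplex in $\F_2^7$. I would show directly that this set is both spectral and a tile, via Lemma~\ref{lem:lift}. Take the surjective linear map $q\colon\F_2^7\to\F_2^3$ sending $e_1,\dots,e_7$ to the seven nonzero vectors of $\F_2^3$. Then $q$ is injective on $E$ and $q(E)=\F_2^3$. Lemma~\ref{lem:lift}, with $T=\{0\}$, gives $E\oplus q^{-1}(0)=\F_2^7$, so $E$ tiles. The second part of the same lemma gives that $E$ is spectral. Equivalently, $E$ is the Hamming code situation, with the Hamming code as tiling complement.

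Hence in every case both properties hold or fail together, and in fact for $k=7$ both hold. The only real input is the $k\le6$ citation; I expect no serious obstacle there. The one point to check is that the reduction to the span is legitimate after translation, which is exactly what Lemma~\ref{lem:affine} provides.
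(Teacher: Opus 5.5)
Your proposal is correct and follows the paper's proof essentially step for step. You translate $E$ to contain $0$, reduce to its span via Lemma~\ref{lem:affine}, and cite Ferguson and Sothanaphan when the span has dimension at most six. In the dimension-seven case, where $E$ is $0$ together with a basis, you map the basis onto the seven nonzero vectors of $\F_2^3$ and apply Lemma~\ref{lem:lift}; this is exactly the paper's argument.
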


\begin{proof}
Translate $E$ to contain $0$ and let $V$ be its linear span, of dimension $r\le7$. If $r\le6$, Lemma~\ref{lem:affine} reduces the statement to $V\cong\F_2^r$, where it is part of \cite[Theorem~1.2]{FS20}. If $r=7$, then $E=\{0,v_1,\dots,v_7\}$ with $v_1,\dots,v_7$ independent. A linear map $q\colon\F_2^d\to\F_2^3$ sending $v_1,\dots,v_7$ to the seven nonzero vectors of $\F_2^3$ is surjective and bijective on $E$, so Lemma~\ref{lem:lift} shows that $E$ tiles and is spectral.
\end{proof}

\begin{remark}\label{rem:clvz}
Cohen, Litsyn, Vardy and Z\'emor showed that every tile of $\F_2^d$ with at most eight elements is a linear tile, that is, it has a tiling complement that is a linear subspace \cite[end of Section~4; Corollary~7.3 and Proposition~6.1]{CLVZ96}. If $A$ is such a complement of a tile $E$, the quotient map $\F_2^d\to\F_2^d/A$ maps $E$ bijectively onto $\F_2^d/A$, so $E$ is spectral by Lemma~\ref{lem:lift}.
\end{remark}

\section{Universal spectra for small tiling complements}\label{sec:tiles}

Following Lagarias and Wang \cite{LW97}, a \emph{universal spectrum} of a tile $T$ is a set that is a spectrum of every tiling complement of $T$. If $|B|\,|T|=|G|$ and $W_T(b+b')\ne0$ for all distinct $b,b'\in B$, then $B$ is a universal spectrum of $T$; this sufficient condition is due to Lagarias and Szab\'o \cite{LS01}, see \cite[Proposition~4]{FMM06}. Indeed, if $E\oplus T=G$, then \eqref{eq:tiling} forces $W_E(b+b')=0$ for distinct $b,b'\in B$, and $|B|=|E|$. The condition does not require $T$ to tile.

\begin{theorem}\label{thm:universal}
Let $d\ge3$ and let $T\subseteq\F_2^d$ have eight elements. There is a set $B\subseteq\F_2^d$ with $|B|=2^{d-3}$ such that
\begin{equation}\label{eq:nonzero}
W_T(b+b')\ne0\qquad\text{for all distinct } b,b'\in B .
\end{equation}
Consequently, if $E\oplus T=\F_2^d$, then $B$ is a spectrum of $E$.
\end{theorem}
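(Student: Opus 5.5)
We reduce to the simplex $S=\{0,e_1,\dots,e_7\}\subseteq\F_2^7$ by a linear change of variables, and then use pigeonhole to find many points of an explicit $16$-point set inside the image of the transposed map.

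\emph{Step 1: reduction to the simplex.} The zero set of $W_T$ is translation invariant, so we may assume $0\in T$ and write $T=\{0,t_1,\dots,t_7\}$. Let $L\colon\F_2^7\to\F_2^d$ be the linear map with $L(e_i)=t_i$, so that $T=L(S)$ and $L$ is injective on $S$. Since $u\cdot Ls=L^{\T}u\cdot s$, we get
\[
W_T(u)=W_S(L^{\T}u),\qquad W_S(x)=1+\sum_{i=1}^7(-1)^{x_i}=8-2\wt(x).
\]
Hence $W_T(u)=0$ if and only if $\wt(L^{\T}u)=4$. Let $U=\operatorname{im}L^{\T}\subseteq\F_2^7$ have dimension $r$. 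Then $r=\rank L$, and $r\ge3$ because eight distinct points span a space of dimension at least three. The kernel $K=\ker L^{\T}$ has $2^{d-r}$ elements.

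\emph{Step 2: a $16$-point set with no weight-$4$ differences.} Put $Q=S\cup(S+\one)$, the simplex together with its antipodal translate; these are the vectors of weight $\le1$ or $\ge6$. The sum of two distinct elements of $Q$ is one of $e_i$, $e_i+e_j$, $\one$, $\one+e_i$ or $\one+e_i+e_j$. These have weights $1,2,7,6,5$, so no such sum has weight $4$.

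\emph{Step 3: pigeonhole over cosets of $U$.} The $2^{7-r}$ cosets of $U$ partition $\F_2^7$ and together contain all $16$ points of $Q$. So some coset $U+c$ satisfies $|Q\cap(U+c)|\ge16/2^{7-r}=2^{r-3}$. Choose $C\subseteq(Q+c)\cap U$ with $|C|=2^{r-3}$. Translating by $c$ does not change differences, so $\wt(x+x')\ne4$ for distinct $x,x'\in C$. This translate into the image of $L^{\T}$ is the ``coset of a dual subspace'' step. It is the step I expected to be the crux: the naive attempt to find a subspace of $U$ avoiding weight $4$ fails in general, and the averaging argument replaces it.

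\emph{Step 4: lifting back to $\F_2^d$.} Set $B=(L^{\T})^{-1}(C)$, so that $|B|=|K|\,|C|=2^{d-r}2^{r-3}=2^{d-3}$. Take distinct $b,b'\in B$.
\begin{itemize}
\item If $L^{\T}b=L^{\T}b'$, then $L^{\T}(b+b')=0$ has weight $0$, so $W_T(b+b')=8\ne0$.
\item Otherwise $L^{\T}(b+b')$ is the sum of two distinct elements of $C$, which does not have weight $4$ by Step 3.
\end{itemize}
Either way \eqref{eq:nonzero} holds.

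\emph{Consequence.} Suppose $E\oplus T=\F_2^d$. Then $|E|=2^{d-3}=|B|$, and \eqref{eq:tiling} together with \eqref{eq:nonzero} forces $W_E(b+b')=0$ for distinct $b,b'\in B$. This is exactly the Lagarias--Szab\'o criterion recalled before the theorem, so $B$ is a spectrum of $E$.
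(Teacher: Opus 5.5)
Your proof is correct and follows the paper's argument essentially step for step: the same $16$-point set $S\cup(j+S)$, the same pigeonhole over cosets of the $r$-dimensional image $U\subseteq\F_2^7$, and the same lifting, where the fibres of sums landing at $0$ give $W_T=8$. The only difference is cosmetic. You work directly with $L^{\T}\colon u\mapsto(u\cdot t_1,\dots,u\cdot t_7)$, which is exactly the composite $\phi^*\circ\rho$ of the paper's two maps through the dual space $V^*$.
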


\begin{proof}
Since \eqref{eq:nonzero} is translation invariant in $T$, we may assume that $0\in T$ and write $T=\{0,t_1,\dots,t_7\}$. Let $V$ be the span of $T$ and $r=\dim V$; as $|T|=8$, we have $3\le r\le 7$. Write $V^*$ for the dual space of $V$, and for $\alpha\in V^*$ put
\[
\widetilde W_T(\alpha)=\sum_{t\in T}(-1)^{\alpha(t)}.
\]

Define $\phi^*\colon V^*\to\F_2^7$ by $\phi^*(\alpha)=(\alpha(t_1),\dots,\alpha(t_7))$. It is linear and injective, because $t_1,\dots,t_7$ span $V$; let $U$ be its image, an $r$-dimensional subspace of $\F_2^7$. With $S=\{0,e_1,\dots,e_7\}\subseteq\F_2^7$ we have
\begin{equation}\label{eq:pullback}
\widetilde W_T(\alpha)=W_S(\phi^*\alpha)\qquad(\alpha\in V^*),
\end{equation}
and $W_S(x)=1+\sum_{i=1}^7(-1)^{x_i}=8-2\wt(x)$, where $\wt$ is the Hamming weight. Thus $W_S(x)=0$ exactly when $\wt(x)=4$.

Let $j=(1,\dots,1)\in\F_2^7$ and $B_0=S\cup(j+S)$. The two parts are disjoint, so $|B_0|=16$. The sum of two distinct points of $S$, or of $j+S$, has weight $1$ or $2$; the sum of a point of $S$ and a point of $j+S$ has weight $5$, $6$ or $7$. Hence $W_S(x+y)\ne0$ for all distinct $x,y\in B_0$.

The $2^{7-r}$ cosets of $U$ partition $\F_2^7$, so some coset $c+U$ contains at least $16/2^{7-r}=2^{r-3}$ points of $B_0$. Choose $k=2^{r-3}$ of them, say $x_1,\dots,x_k$, and put $B_V=(\phi^*)^{-1}(\{x_1+x_i:1\le i\le k\})$. Then $|B_V|=2^{r-3}$, and for distinct $\alpha,\alpha'\in B_V$ the vector $\phi^*(\alpha+\alpha')$ is a sum of two distinct points of $B_0$; by \eqref{eq:pullback}, $\widetilde W_T(\alpha+\alpha')\ne0$.

Finally let $\rho\colon\F_2^d\to V^*$ be the surjection $\rho(u)(v)=u\cdot v$, and put $B=\rho^{-1}(B_V)$. Then $|B|=2^{r-3}\cdot2^{d-r}=2^{d-3}$, and $W_T(b+b')=\widetilde W_T(\rho(b)+\rho(b'))$ for $b,b'\in\F_2^d$. If $\rho(b)\ne\rho(b')$ this is nonzero by the choice of $B_V$; if $\rho(b)=\rho(b')$ it equals $|T|=8$. This proves \eqref{eq:nonzero}. The last assertion follows from the discussion preceding the theorem.
\end{proof}

\begin{remark}\label{rem:universal}
The set $B$ is a spectrum for every tiling complement of $T$, not necessarily for $T$ itself. By \cite[Proposition~4]{FMM06}, $B$ is also a common tiling complement of all spectra of $T$: if $L$ is a spectrum of $T$, then $(L+L)\setminus\{0\}$ lies in the zero set of $W_T$, which is disjoint from $(B+B)\setminus\{0\}$, and $|L|\,|B|=2^d$. Thus every spectrum of an eight-point spectral subset of $\F_2^d$ tiles $\F_2^d$.
\end{remark}

\begin{lemma}\label{lem:universal-small}
Let $T\subseteq\F_2^d$ have one, two or four elements. There is a set $B\subseteq\F_2^d$ with $|B|=2^d/|T|$ satisfying \eqref{eq:nonzero}.
\end{lemma}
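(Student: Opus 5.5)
We follow the template of the proof of Theorem~\ref{thm:universal}. First translate so that $0\in T$, and let $V$ be the span of $T$ with $r=\dim V$. As there, the map $\rho\colon\F_2^d\to V^*$ lets us pull back a set $B_V\subseteq V^*$. If $B_V$ has size $2^r/|T|$ and satisfies $\widetilde W_T(\alpha+\alpha')\ne0$ for distinct $\alpha,\alpha'\in B_V$, then $B=\rho^{-1}(B_V)$ has size $2^d/|T|$. It also satisfies \eqref{eq:nonzero}: pairs with equal image under $\rho$ give $W_T(b+b')=|T|\ne0$. So it suffices to handle the case where $T$ spans $\F_2^r$, and the proof splits by $|T|$.

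\textbf{One element.} Here $r=0$ and $V^*=\{0\}$, so $B_V=\{0\}$ and $B=\F_2^d$. Equivalently, $W_T$ never vanishes, since $|W_T(u)|=1$ for all $u$.

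\textbf{Two elements.} Write $T=\{0,t\}$, so $r=1$. We need $|B_V|=1$, and $B_V=\{0\}$ works trivially. Pulling back gives $B=t^\perp$, a set of size $2^{d-1}$. On $t^\perp$ we have $W_T(b+b')=2$, so \eqref{eq:nonzero} holds.

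\textbf{Four elements.} We argue as in Theorem~\ref{thm:universal}. Write $T=\{0,t_1,t_2,t_3\}$, which spans $V$ of dimension $r\in\{2,3\}$. Define $\phi^*(\alpha)=(\alpha(t_1),\alpha(t_2),\alpha(t_3))\in\F_2^3$, with image $U$ of dimension $r$. Then $\widetilde W_T(\alpha)=W_S(\phi^*\alpha)$ for $S=\{0,e_1,e_2,e_3\}$, and $W_S(x)=4-2\wt(x)$. This vanishes exactly when $\wt(x)=2$.

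Take $B_0=\{0,j\}$ with $j=(1,1,1)$. Its only nonzero difference has weight $3$, so $W_S(j)=-2\ne0$. We need $2^{r-2}$ points of $B_0$ in a single coset of $U$:
\begin{itemize}
\item if $r=2$, one point suffices;
\item if $r=3$, then $U=\F_2^3$ and both points lie in it.
\end{itemize}
Set $B_V=(\phi^*)^{-1}(\{x_1+x_i\})$ as before. For $r=3$ this is $\{0,\alpha_j\}$ with $\alpha_j(t_i)=1$ for all $i$; then $\widetilde W_T(\alpha_j)=-2\ne0$.

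This lemma has no real obstacle; the argument is just bookkeeping. The one point to check is that $|B_V|=2^{r}/|T|$ in each case, which the count $16/2^{7-r}$ analogue gives directly: $2/2^{3-r}=2^{r-2}$.
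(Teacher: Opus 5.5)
Your proof is correct and is essentially the paper's argument: you use the same reduction via $\rho$ to a set $B_V\subseteq V^*$. You take $B_V=\{0\}$ whenever $T=V$ (one or two points, or four points spanning a plane), and $B_V=\{0,\alpha\}$ with $\alpha(t_i)=1$ for all $i$ when $T=\{0,t_1,t_2,t_3\}$ with $t_1,t_2,t_3$ independent, so that $\widetilde W_T(\alpha)=-2$. The $\phi^*$ and coset framing of the four-point case is harmless but not needed, since both subcases end with exactly these explicit choices.
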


\begin{proof}
As in the last step of the proof of Theorem~\ref{thm:universal}, it suffices to find $B_V\subseteq V^*$ with $|B_V|=2^r/|T|$ and $\widetilde W_T(\alpha+\alpha')\ne0$ for distinct $\alpha,\alpha'\in B_V$, where $0\in T$, $V$ is the span of $T$ and $r=\dim V$. If $T=V$, take $B_V=\{0\}$. Otherwise $|T|=4$ and $T=\{0,t_1,t_2,t_3\}$ with $t_1,t_2,t_3$ a basis of $V$; take $B_V=\{0,\alpha\}$ with $\alpha(t_i)=1$ for all $i$, so that $\widetilde W_T(\alpha)=1-3=-2$.
\end{proof}

\begin{corollary}\label{cor:large-tiles}
Let $d\ge3$. Every tile $E$ of $\F_2^d$ with $|E|\ge2^{d-3}$ is spectral.
\end{corollary}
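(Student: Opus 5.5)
Since $E$ is a tile of $\F_2^d$, there is $T$ with $E\oplus T=\F_2^d$, and then $|E|\,|T|=2^d$. Hence $|E|$ and $|T|$ are powers of two. The hypothesis $|E|\ge 2^{d-3}$ is equivalent to $|T|\le 8$, so $|T|\in\{1,2,4,8\}$. The plan is to produce a universal spectrum of $T$ and apply it to the complement $E$ of $T$. This follows the Lagarias--Szab\'o sufficient condition recalled before Theorem~\ref{thm:universal}.

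If $|T|\in\{1,2,4\}$, Lemma~\ref{lem:universal-small} gives a set $B$ with $|B|=2^d/|T|=|E|$ satisfying \eqref{eq:nonzero}. If $|T|=8$, Theorem~\ref{thm:universal} (we are assuming $d\ge3$) gives $B$ with $|B|=2^{d-3}=|E|$ satisfying \eqref{eq:nonzero}.

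In either case, take distinct $b,b'\in B$. Their sum $b+b'$ is nonzero, so the tiling identity \eqref{eq:tiling} gives $W_E(b+b')W_T(b+b')=0$. Since $W_T(b+b')\ne0$ by \eqref{eq:nonzero}, we get $W_E(b+b')=0$. Together with $|B|=|E|$, this says exactly that $B$ is a spectrum of $E$ in the sense of \eqref{eq:spectral}.

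No step here is a real obstacle, since the work is already done in Theorem~\ref{thm:universal} and Lemma~\ref{lem:universal-small}. The only points to check are that $|T|$ must be a power of two at most $8$, so that those are the only cases, and that the edge cases fit the same argument. For instance, $|E|=2^d$ with $T$ a single point gives $B$ a single point, which is trivially a spectrum.
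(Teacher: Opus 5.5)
Your proposal is correct and follows essentially the same route as the paper's proof. You bound $|T|=2^d/|E|\in\{1,2,4,8\}$, take $B$ from Lemma~\ref{lem:universal-small} or Theorem~\ref{thm:universal}, and conclude from \eqref{eq:tiling} that $B$ is a spectrum of $E$.
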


\begin{proof}
A tiling complement $T$ of $E$ has $|T|=2^d/|E|\in\{1,2,4,8\}$. By Theorem~\ref{thm:universal} or Lemma~\ref{lem:universal-small} there is $B$ with $|B|=|E|$ satisfying \eqref{eq:nonzero}, and \eqref{eq:tiling} shows that $B$ is a spectrum of $E$.
\end{proof}

\begin{proposition}\label{prop:forward}
Every tile of $\F_2^7$ is spectral.
\end{proposition}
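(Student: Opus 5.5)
The argument splits according to the size of the tile. Let $E$ be a tile of $\F_2^7$ with $E\oplus T=\F_2^7$. Since $|E|\,|T|=2^7$, the size $|E|$ is a power of two, $|E|=2^k$ with $0\le k\le 7$. I would treat small tiles ($|E|\le 8$) and large tiles ($|E|\ge 16$) separately.

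\emph{Small tiles.} If $|E|\in\{1,2,4\}$, Lemma~\ref{lem:small} already says that $E$ is spectral. If $|E|=8$, the set $E$ tiles $\F_2^7$, and Lemma~\ref{lem:eight} (with $d=7\ge3$) turns this into spectrality. That lemma rests on \cite[Theorem~1.2]{FS20} when the affine span has dimension at most six, and on the projection argument of Lemma~\ref{lem:lift} when it has dimension seven. Alternatively, Remark~\ref{rem:clvz} gives the case $|E|=8$ without the computer verification of \cite{FS20}. The same size also falls under Corollary~\ref{cor:large-tiles}, since $8\ge 2^{7-3}=16$ fails. So the eight-point case genuinely needs one of these two inputs rather than the universal spectrum.

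\emph{Large tiles.} If $|E|\ge16=2^{7-3}$, Corollary~\ref{cor:large-tiles} with $d=7$ applies directly. Concretely, $|T|=2^7/|E|\in\{1,2,4,8\}$. Theorem~\ref{thm:universal} (for $|T|=8$) or Lemma~\ref{lem:universal-small} (for $|T|\le4$) provides a set $B$ with $|B|=|E|$ and $W_T(b+b')\ne0$ for distinct $b,b'\in B$. The tiling identity \eqref{eq:tiling} then forces $W_E(b+b')=0$, so $B$ is a spectrum of $E$.

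No step is a genuine obstacle, since all the work has been done in the preceding results. The only point needing care is to check that the size ranges cover everything: the sizes $1,2,4,8,16,32,64,128$ are exhausted by the two cases. I would also note explicitly that the eight-point case is the only place where the external theorem of Ferguson and Sothanaphan (or \cite{CLVZ96}) enters.
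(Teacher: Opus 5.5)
Your proof is correct and follows the paper's own argument: sizes $1,2,4$ by Lemma~\ref{lem:small}, size $8$ by Lemma~\ref{lem:eight}, and sizes $16$ through $128$ by Corollary~\ref{cor:large-tiles}. One sentence is garbled and should be fixed. It should say that size $8$ does \emph{not} fall under Corollary~\ref{cor:large-tiles}, because $8<2^{7-3}=16$; that is what your next sentence assumes.
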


\begin{proof}
The size of a tile divides $128$. Sizes $1$, $2$ and $4$ follow from Lemma~\ref{lem:small}, size $8$ from Lemma~\ref{lem:eight}, and sizes $16$ to $128$ from Corollary~\ref{cor:large-tiles}.
\end{proof}

\section{Cardinalities of spectral sets}\label{sec:cardinality}

Let $S_d$ be the $2^d\times2^d$ Walsh matrix with entries $S_d(x,y)=(-1)^{x\cdot y}$, $x,y\in\F_2^d$. Let $\mathcal{E}_d$ be the multiset in which each $k\in\{0,\dots,d\}$ occurs $\binom dk$ times, and let $s_d(m)$ be the sum of the $m$ smallest elements of $\mathcal{E}_d$. For an integer $n\ne0$, $\nu_2(n)$ denotes the $2$-adic valuation of $n$.

\begin{lemma}\label{lem:smith}
The Smith normal form of $S_d$ over $\Z$ has diagonal entries $2^k$ with multiplicity $\binom dk$. Every $m\times m$ minor of $S_d$ is divisible by $2^{s_d(m)}$.
\end{lemma}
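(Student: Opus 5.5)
The plan is to first compute the Smith normal form of $S_d$ by induction on $d$, using the tensor structure $S_d=S_1\otimes S_{d-1}$ with $S_1=\begin{pmatrix}1&1\\1&-1\end{pmatrix}$. Over $\Z$, $S_1$ is equivalent to $\diag(1,2)$: subtract the first row from the second to get $\begin{pmatrix}1&1\\0&-2\end{pmatrix}$, then clear the off-diagonal entry with a column operation and flip a sign. Unimodular equivalences tensor to unimodular equivalences, so $S_d$ is equivalent to $\diag(1,2)^{\otimes d}$. This is a diagonal matrix whose entries are $2^{\wt(x)}$ for $x\in\F_2^d$, so $2^k$ appears $\binom dk$ times.

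The diagonal entries $2^k$ already form a divisibility chain once they are sorted by $k$. A diagonal matrix with entries $a_1\mid a_2\mid\cdots$ is in Smith normal form, and the Smith normal form is unique up to units. This proves the first claim.

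For the minors I will use the standard characterization of the determinantal divisors: the $m$-th determinantal divisor $D_m(S_d)$ is the gcd of all $m\times m$ minors. It is invariant under multiplication by unimodular matrices, by the Cauchy--Binet formula, and it equals the product $a_1\cdots a_m$ of the first $m$ invariant factors. Here that product is $2^{s_d(m)}$, since the $m$ smallest invariant factors correspond to the $m$ smallest elements of $\mathcal{E}_d$. Every $m\times m$ minor is a multiple of $D_m(S_d)$, which gives the second claim.

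There is no real obstacle. The step that deserves explicit justification is the invariance of $D_m$ under unimodular equivalence. To prove it, apply Cauchy--Binet to $PS_dQ$: each minor of $PS_dQ$ is an integer combination of minors of $S_d$, and the same holds in reverse with $P^{-1}$ and $Q^{-1}$, so the two gcds are equal. Once that is recorded, the computation of $D_m$ for the diagonal form is immediate.
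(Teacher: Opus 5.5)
Your proposal is correct and follows essentially the same route as the paper. You reduce $S_1$ unimodularly to $\diag(1,2)$, take Kronecker powers to get $\diag(2^{\wt(x)})$, and apply Cauchy--Binet to the factorization. Phrasing the second step through invariance of determinantal divisors is slightly more than needed: the paper uses only the one-directional expansion of each minor of $S_d$ as an integer combination of minors of the diagonal factor, and that is all the divisibility claim requires.
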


\begin{proof}
With
\[
U_1=\begin{pmatrix}1&0\\1&-1\end{pmatrix},\qquad
D_1=\begin{pmatrix}1&0\\0&2\end{pmatrix},\qquad
V_1=\begin{pmatrix}1&1\\0&1\end{pmatrix}
\]
we have $S_1=U_1D_1V_1$, and $\det U_1=-1$, $\det V_1=1$. Taking $d$-fold Kronecker powers, $S_d=U_dD_dV_d$ with $U_d,V_d$ invertible over $\Z$ and $D_d=\diag(2^{\wt(x)})_{x\in\F_2^d}$. Sorting the diagonal entries gives a divisibility chain, which proves the first statement \cite{New72}. By the Cauchy--Binet formula, every $m\times m$ minor of $S_d$ is an integral linear combination of $m\times m$ minors of $D_d$. Each such minor is $0$ or a product of $m$ distinct diagonal entries, and is therefore divisible by $2^{s_d(m)}$.
\end{proof}

\begin{proposition}\label{prop:detbound}
If $E\subseteq\F_2^d$ is spectral and $m=|E|\ge2$, then
\begin{equation}\label{eq:detbound}
\frac m2\,\nu_2(m)\ \ge\ s_d(m).
\end{equation}
\end{proposition}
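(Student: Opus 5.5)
Let $B$ be a spectrum of $E$, so $(E,B)$ is a spectral pair, and consider the $m\times m$ matrix $H=(\chi_b(a))_{b\in B,\,a\in E}$. We will compute the $2$-adic valuation of $\det H$ in two ways.

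First, $H$ is a submatrix of the Walsh matrix $S_d$: take the rows indexed by $B$ and the columns indexed by $E$. Its determinant is therefore an $m\times m$ minor of $S_d$. By Lemma~\ref{lem:smith}, $2^{s_d(m)}$ divides $\det H$.

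Second, $H$ is a real Hadamard matrix, as recalled in Section~\ref{sec:prelim}. Hence $HH^{\T}=mI$, which gives $(\det H)^2=m^m$ and $|\det H|=m^{m/2}$. This quantity is a nonzero integer, so $\nu_2(\det H)$ is defined and equals $\frac m2\nu_2(m)$. We do not need to know whether $m/2$ is an integer, because $(\det H)^2=m^m$ already gives $2\nu_2(\det H)=m\,\nu_2(m)$.

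Since $\det H\ne0$, the divisibility $2^{s_d(m)}\mid\det H$ yields $\nu_2(\det H)\ge s_d(m)$. Combining this with the previous paragraph gives \eqref{eq:detbound}.

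The only delicate point is that the minor must be nonzero for the divisibility to yield an inequality of valuations. This is exactly what the Hadamard property guarantees. The hypothesis $m\ge2$ is not really needed: for $m=1$ both sides of \eqref{eq:detbound} are $0$, so the inequality holds trivially.
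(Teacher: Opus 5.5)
Your proof is correct and follows the paper's argument: you view $H$ as an $m\times m$ minor of $S_d$, apply Lemma~\ref{lem:smith}, and compare with $\nu_2(\det H)=\frac m2\nu_2(m)$, which comes from $(\det H)^2=m^m$. You also state explicitly that $\det H\neq0$ is needed to turn the divisibility into a valuation inequality, a point the paper leaves implicit.
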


\begin{proof}
If $B$ is a spectrum of $E$, the matrix $H=(\chi_b(e))_{b\in B,\,e\in E}$ is an $m\times m$ submatrix of $S_d$ with $HH^{\T}=mI$. Hence $(\det H)^2=m^m$, so $\nu_2(\det H)=\frac m2\nu_2(m)$, and Lemma~\ref{lem:smith} gives \eqref{eq:detbound}.
\end{proof}

For $d=7$ the multiplicities in $\mathcal{E}_7$ are $1,7,21,35,35,21,7,1$. Table~\ref{tab:smith} lists the two sides of \eqref{eq:detbound} for every multiple of four below $64$ that is not a power of two.

\begin{table}[ht]
\centering
\caption{The bound \eqref{eq:detbound} for $d=7$.}\label{tab:smith}
\begin{tabular}{lrrrrrrrrrrr}
\toprule
$m$ & $12$ & $20$ & $24$ & $28$ & $36$ & $40$ & $44$ & $48$ & $52$ & $56$ & $60$\\
\midrule
$\frac m2\nu_2(m)$ & $12$ & $20$ & $36$ & $28$ & $36$ & $60$ & $44$ & $96$ & $52$ & $84$ & $60$\\
$s_7(m)$ & $15$ & $31$ & $39$ & $47$ & $70$ & $82$ & $94$ & $106$ & $118$ & $130$ & $142$\\
\bottomrule
\end{tabular}
\end{table}

The following facts are \cite[Corollary~4.5 and Proposition~6.4]{AAB17}; we include the short argument.

\begin{lemma}\label{lem:half}
Let $E\subseteq\F_2^d$ be spectral. If $|E|>2^{d-1}$, then $E=\F_2^d$. If $|E|=2^{d-1}$, then $E\oplus\{0,t\}=\F_2^d$ for some $t$.
\end{lemma}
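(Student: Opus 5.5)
The argument works with a spectrum $B$ of $E$, $|B|=|E|=m$, through the difference set $E+E$ and the zero set of $W_E$. We first prove the bound $m\le 2^{d-1}$ for $E\ne\F_2^d$ by a pigeonhole argument on characters, and then analyse the extremal case $m=2^{d-1}$ using the fact that $W_E$ takes the value $\pm m$.

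\emph{Large sets.} Suppose $|E|>2^{d-1}$ and $E\ne\F_2^d$. Since $(E,B)$ is a spectral pair, so is $(B,E)$, and $W_B(e+e')=0$ for distinct $e,e'\in E$. We show that $E+E=\F_2^d$. For $u\in\F_2^d$, the sets $E$ and $u+E$ each have more than half of the points, so they intersect. Hence $u=e+e'$ for some $e,e'\in E$, with $e\ne e'$ when $u\ne0$. It follows that $W_B(u)=0$ for every $u\ne0$. Then $\one_B$ has Fourier transform supported at $0$, so $\one_B$ is constant. As $B\ne\emptyset$, this gives $B=\F_2^d$ and therefore $|E|=|B|=2^d$, which means $E=\F_2^d$.

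\emph{Half-size sets.} Let $|E|=m=2^{d-1}$. Parseval gives $\sum_u W_E(u)^2=2^d m$. The term $u=0$ contributes $m^2=2^{d-1}m$, so
\[
\sum_{u\ne0}W_E(u)^2=2^{d-1}m .
\]
We now use the spectrum. Since $m\ge2$, pick distinct $b,b'\in B$ and put $t=b+b'\ne0$; then $W_E(t)=0$. Counting, $\sum_{e\in E}(-1)^{t\cdot e}=0$ says that $E$ has exactly $m/2$ points on each side of the hyperplane $t^\perp$. This is not yet what we need, so we change the roles: apply the first step's reasoning to the pair $(B,E)$, which is also spectral with $|B|=2^{d-1}$. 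Then $W_B$ vanishes on $(E+E)\setminus\{0\}$. The aim is a vector $t$ with $t\notin E+E$; then $E\cap(t+E)=\emptyset$, and together with $|E|=2^{d-1}$ this gives $E\oplus\{0,t\}=\F_2^d$, which is the desired conclusion.

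\emph{Main obstacle: producing $t\notin E+E$.} Suppose instead that $E+E=\F_2^d$. Then, as in the first step, $W_B(u)=0$ for all $u\ne0$, so $B=\F_2^d$, contradicting $|B|=2^{d-1}<2^d$. Hence some $t\ne0$ satisfies $t\notin E+E$. This means $E\cap(t+E)=\emptyset$, and since the two sets have total size $2^d$, we get $E\sqcup(t+E)=\F_2^d$, i.e.\ $E\oplus\{0,t\}=\F_2^d$.

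The only delicate point is the claim in the first step that $W_B(u)=0$ for all $u\ne0$ forces $B=\F_2^d$. This is Fourier inversion: $\one_B(x)=2^{-d}\sum_u W_B(u)(-1)^{u\cdot x}=|B|/2^d$ for every $x$, and since $\one_B$ is $0/1$-valued and $B\neq\emptyset$, this constant must be $1$. With that in hand, both parts reduce to the fact that two subsets of $\F_2^d$ whose sizes sum to more than $2^d$ must meet.
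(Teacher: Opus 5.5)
Your proof is correct and is essentially the paper's argument. In both, if every $u\ne0$ is a sum of two distinct points of one side of the spectral pair, the Fourier transform of the other side's indicator vanishes off $0$, forcing that side to be all of $\F_2^d$; in the half-size case one then takes any $t\notin E+E$. The only differences are cosmetic: in the large case you pass through $(B,E)$ to get $B=\F_2^d$, where the paper applies the pigeonhole to $B$ directly to get $E=\F_2^d$, and your Parseval computation and the choice $t=b+b'$ are never used and can be deleted.
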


\begin{proof}
Let $B$ be a spectrum of $E$. If $|B|>2^{d-1}$, then $B$ and $B+u$ intersect for every $u$, so every $u\ne0$ is a sum of two distinct elements of $B$, and \eqref{eq:spectral} gives $W_E(u)=0$ for all $u\ne0$. By Fourier inversion $\one_E$ is constant, so $E=\F_2^d$. If $|E|=2^{d-1}$ and every $u\ne0$ were a sum of two distinct elements of $E$, the same argument applied to the spectral pair $(B,E)$ would give $B=\F_2^d$, which is impossible. Hence there is $t\ne0$ outside $E+E$; then $E$ and $E+t$ are disjoint and together have $2^d$ elements.
\end{proof}

\begin{corollary}\label{cor:cardinality}
A nonempty spectral subset of $\F_2^7$ has $1$, $2$, $4$, $8$, $16$, $32$, $64$ or $128$ elements. Every spectral subset of $\F_2^7$ with $64$ elements tiles.
\end{corollary}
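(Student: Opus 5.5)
We combine the spectral pair/Hadamard correspondence with Proposition~\ref{prop:detbound} and Lemma~\ref{lem:half}. Let $E\subseteq\F_2^7$ be spectral with $m=|E|$ and spectrum $B$. The matrix $H=(\chi_b(e))$ is a real Hadamard matrix of order $m$. By the classical restriction on orders of Hadamard matrices, $m\in\{1,2\}$ or $4\mid m$. This restriction can be checked directly: compare three rows, normalised so that the first is all ones, and count the columns in which they agree or disagree.

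Next, Lemma~\ref{lem:half} with $d=7$ disposes of the large sizes. If $m>64$, then $E=\F_2^7$ and $m=128$. If $m=64$, then $E\oplus\{0,t\}=\F_2^7$ for some $t$, which is also the second assertion of the corollary. So only multiples of four with $4\le m<64$ remain.

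Among these, the powers of two $4,8,16,32$ are allowed, so we only have to rule out $m\in\{12,20,24,28,36,40,44,48,52,56,60\}$. These are exactly the columns of Table~\ref{tab:smith}. For each one, Proposition~\ref{prop:detbound} requires $\frac m2\nu_2(m)\ge s_7(m)$. Every column of the table violates this inequality: for instance $12<15$ at $m=12$, and $36<39$ at $m=24$, which is the tightest case. Hence none of these sizes occurs.

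The only real work is computing $s_7(m)$ correctly from the multiset with multiplicities $1,7,21,35,\dots$. For example,
\[
s_7(12)=0+7\cdot1+4\cdot2=15,\qquad s_7(24)=0+7+16\cdot2=39,
\]
and the remaining entries follow the same pattern. This table check is the step to verify carefully; the rest is immediate.
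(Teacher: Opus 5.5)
Your proposal is correct and follows the paper's proof essentially step for step. You use the Hadamard order restriction, then Proposition~\ref{prop:detbound} with Table~\ref{tab:smith} for the multiples of four below $64$ that are not powers of two, then Lemma~\ref{lem:half} for $m\ge 64$, including the tiling by $\{0,t\}$ at $m=64$. Your values of $s_7(m)$ check out; the only quibble is that $m=24$ is not uniquely the tightest case, since $m=12$ also falls short by exactly $3$.
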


\begin{proof}
As is well known, a real Hadamard matrix has order $1$, $2$ or a multiple of four: if the first row is normalized to $(1,\dots,1)$, the columns split into two halves according to the signs of the second row, and orthogonality of a third row to the first two forces it to be balanced on each half, so that each half has even size. By Proposition~\ref{prop:detbound} and Table~\ref{tab:smith}, no spectral set has size $m\le64$ that is a multiple of four but not a power of two. For sizes at least $64$ we apply Lemma~\ref{lem:half}.
\end{proof}

\begin{remark}\label{rem:smith-other}
The same computation applies in other dimensions. For $d=8$, inequality \eqref{eq:detbound} fails for every multiple $m\le128$ of four that is not a power of two; together with Lemma~\ref{lem:half}, every spectral subset of $\F_2^8$ has power-of-two size. For $d=9$ it fails for all such $m\le256$ except $m=48$ and $m=96$. For $m=12$ the inequality holds exactly when $d\ge10$; by \cite[Theorem~4.7]{AAB17} this means that every binary log-Hadamard matrix of order $12$ has rank at least ten, and the rank-ten matrix of \cite[Section~5]{FS20} shows that the bound is attained. In dimensions five and six, the sizes $20$, $24$ and $28$ were excluded in \cite{FS19add} using Sloane's library of Hadamard matrices \cite{Slo}.
\end{remark}

\section{Canonical column sets of log-Hadamard matrices}\label{sec:hadamard}

All ranks in this section are over $\F_2$. A binary $m\times m$ matrix $L$ is \emph{log-Hadamard} if $\bigl((-1)^{L_{ij}}\bigr)$ is a real Hadamard matrix, and \emph{dephased} if its first row and first column vanish. For a real Hadamard matrix $H$ of order $m$ and indices $i_0,j_0$, the \emph{dephasing of $H$ at $(i_0,j_0)$} is the binary matrix $L$ with
\[
(-1)^{L_{ij}}=H_{ij}H_{ij_0}H_{i_0j}H_{i_0j_0}.
\]
It is log-Hadamard, being obtained from $H$ by multiplying rows and columns by signs, and its row $i_0$ and column $j_0$ vanish. The dephasing of $H$ at the first row and column is simply called the dephasing of $H$, and its rank the \emph{dephased rank} of $H$. Rows and columns are indexed from $0$.

\begin{definition}\label{def:canonical}
Let $L$ be a dephased log-Hadamard matrix of order $m\ge2$ and rank $r$. Let $C$ be an $r\times m$ matrix whose rows form a basis of the row space of $L$. The set of columns of $C$, viewed as points of $\F_2^r$, is called a \emph{canonical column set} of $L$.
\end{definition}

The $m$ columns of $C$ are distinct: two equal columns of $C$ would give two equal columns of $L$, contradicting the orthogonality of the corresponding columns of the Hadamard matrix. A different choice of basis changes a canonical column set by an element of $\GL(r,\F_2)$. Canonical column sets make the construction in \cite[Theorem~4.7]{AAB17} explicit. The next lemma shows that the original spectral set projects bijectively onto them.

\begin{lemma}\label{lem:canonical}
Let $(E,B)$ be a spectral pair in $\F_2^d$ with $0\in E$ and $0\in B$, and let $L=(b\cdot e)_{b\in B,\,e\in E}$, with rows and columns ordered so that $0$ comes first. Then $L$ is a dephased log-Hadamard matrix. If $r=\rank L$, there is a surjective linear map $q\colon\F_2^d\to\F_2^r$ which maps $E$ bijectively onto a canonical column set of $L$. In particular, if some canonical column set of $L$ tiles $\F_2^r$, then $E$ tiles $\F_2^d$.
\end{lemma}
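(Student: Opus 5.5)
The plan is to build $q$ directly from a basis of the row space of $L$, using the fact that the rows of $L$ are the linear functionals $e\mapsto b\cdot e$ restricted to $E$.

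First, $L$ is dephased because $0\in B$ and $0\in E$ give a vanishing first row and first column. It is log-Hadamard because $\bigl((-1)^{L_{be}}\bigr)=(\chi_b(e))$ is the Hadamard matrix $H$ of the spectral pair (Section~\ref{sec:prelim}).

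Next, the row of $L$ indexed by $b$ is the vector $(b\cdot e)_{e\in E}$. This is the image of $b$ under the linear map $\lambda\colon\F_2^d\to\F_2^E$, $\lambda(u)=(u\cdot e)_{e\in E}$. So the row space $R$ of $L$ is spanned by $\lambda(B)\subseteq\lambda(\F_2^d)$. Choose $b_1,\dots,b_r\in B$ whose rows form a basis of $R$; this is possible because the rows indexed by $B$ span $R$. Let $C$ be the $r\times m$ matrix with these rows; its column set is a canonical column set. Define
\[
q(x)=(b_1\cdot x,\dots,b_r\cdot x).
\]
This map is linear, and it is surjective because $b_1,\dots,b_r$ are linearly independent in $\F_2^d$: any dependence among them would give a dependence among their rows $\lambda(b_i)$. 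By construction, $q(e)$ is exactly the column of $C$ indexed by $e$.

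Injectivity of $q$ on $E$ follows from the remark after Definition~\ref{def:canonical}. Since every row of $L$ is a combination of the rows of $C$, two equal columns of $C$ would force two equal columns of $L$. That contradicts the orthogonality of distinct columns of $H$, since $H^{\T}H=mI$ with $m\ge2$. Hence $q$ maps $E$ bijectively onto the column set of $C$.

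For the final claim, suppose some canonical column set $X$ tiles $\F_2^r$. Any other canonical column set is $g(X)$ for some $g\in\GL(r,\F_2)$, so by Lemma~\ref{lem:affine} our particular $q(E)$ tiles $\F_2^r$ as well. Lemma~\ref{lem:lift} then lifts this tiling to $E\oplus q^{-1}(T)=\F_2^d$.

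The main point needing care is making sure the basis of the row space can be realized by actual frequencies $b_i$, so that the abstract matrix $C$ comes from a linear map on $\F_2^d$. Choosing the basis among the rows of $L$ themselves settles this, and the change-of-basis invariance handles an arbitrary choice of $C$.
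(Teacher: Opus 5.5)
Your proposal is correct and follows essentially the same route as the paper: you choose $b_1,\dots,b_r\in B$ whose rows form a basis of the row space and set $q(x)=(b_1\cdot x,\dots,b_r\cdot x)$. Surjectivity then comes from the independence of the $b_i$, injectivity on $E$ from the distinctness of the columns of $C$, and the final claim from the $\GL(r,\F_2)$-invariance of tiling together with Lemma~\ref{lem:lift}.
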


\begin{proof}
The matrix $((-1)^{b\cdot e})$ is the Hadamard matrix of the spectral pair, and the row and column of $0$ vanish. Choose $b_1,\dots,b_r\in B$ whose rows in $L$ form a basis of the row space, and define $q(x)=(b_1\cdot x,\dots,b_r\cdot x)$. The vectors $b_1,\dots,b_r$ are linearly independent, since a linear relation among them would give the same relation among the corresponding rows of $L$; hence $q$ is surjective. For $e\in E$, $q(e)$ is the column of $C$ indexed by $e$, where $C$ consists of the chosen rows. Thus $q(E)$ is a canonical column set, and $q$ is injective on $E$ because the columns of $C$ are distinct. If some canonical column set tiles, so does $q(E)$, which differs from it by an invertible linear map; Lemma~\ref{lem:lift} completes the proof.
\end{proof}

Two real Hadamard matrices are \emph{equivalent} if one is obtained from the other by permuting rows, permuting columns, and multiplying rows and columns by $-1$. Transposition is not part of this equivalence. The rank statement in the next lemma is \cite[Proposition~4.3]{FS20}; we need the finer statement about the point sets.

\begin{lemma}\label{lem:equivalence}
Let $H$ and $H'$ be equivalent real Hadamard matrices of order $m\ge2$. Then $H$ and $H'$ have the same dephased rank $r$, and every canonical column set of the dephasing of $H$ is affinely equivalent, inside $\F_2^r$, to every canonical column set of the dephasing of $H'$. In particular, one of them tiles $\F_2^r$ if and only if the other does.
\end{lemma}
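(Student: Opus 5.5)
The plan is to replace the canonical column set by an intrinsic object: the set of columns of the dephased matrix itself, viewed inside its own column space. Let $L$ be a dephased log-Hadamard matrix of rank $r$, and let $C$ be as in Definition~\ref{def:canonical}. Every row of $L$ is a combination of the rows of $C$, so $L=AC$ for some $m\times r$ matrix $A$. This $A$ has rank $r$, because $\rank L=r$, so $x\mapsto Ax$ is a linear isomorphism of $\F_2^r$ onto the column space $\mathrm{Col}(L)\subseteq\F_2^m$. It carries the canonical column set onto the set $X(L)$ of columns of $L$. Hence every canonical column set of $L$ is linearly equivalent to $X(L)\subseteq\mathrm{Col}(L)$. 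It then suffices to prove the following claim: if $H\sim H'$, there is a bijective affine map $\mathrm{Col}(L)\to\mathrm{Col}(L')$ carrying $X(L)$ onto $X(L')$. This claim also shows $\rank L=\rank L'$.

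First I handle the equivalence operations with the dephasing position held fixed. Write $H=((-1)^{M_{ij}})$. The dephasing formula $H_{ij}H_{ij_0}H_{i_0j}H_{i_0j_0}$ is unchanged when a row or column of $H$ is multiplied by $-1$, since each sign appears twice. Now permute rows by $\sigma$ and columns by $\tau$, and dephase at the images of the positions used before. The resulting matrix is obtained from the old one by permuting rows and columns. On $X$, a row permutation acts as a coordinate permutation of $\F_2^m$, which is linear. A column permutation only relabels the elements of $X$ and does not change the set. So $H'$ dephased at $(\sigma(0),\tau(0))$ is linearly equivalent to $H$ dephased at $(0,0)$. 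What remains is to compare dephasings of a single $H$ at different positions $(i_0,j_0)$.

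For this step, put $N^{(j_0)}_j=M_{\cdot j}+M_{\cdot j_0}\in\F_2^m$ and define the linear map $P_{i_0}(x)=x+x_{i_0}\one$. Column $j$ of the dephasing at $(i_0,j_0)$ is then $P_{i_0}(N^{(j_0)}_j)$.

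Changing $j_0$ to $j_0'$ adds the fixed vector $M_{\cdot j_0}+M_{\cdot j_0'}$ to every $N_j$. So the column set is translated by the constant $w=P_{i_0}(M_{\cdot j_0}+M_{\cdot j_0'})$. Both sets contain $0$, so $w$ lies in each set. Consequently both sets span the same subspace, and the translation is an affine bijection of that span onto itself.

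Changing $i_0$ to $i_0'$ uses the identity $P_{i_0'}\circ P_{i_0}=P_{i_0'}$. It holds because $P_{i_0'}(x+c\one)=x+c\one+(x_{i_0'}+c)\one=P_{i_0'}(x)$. Hence the new columns are the images of the old ones under $P_{i_0'}$. By the same identity with the roles exchanged, $P_{i_0}$ inverts $P_{i_0'}$ on these columns, so $P_{i_0'}$ restricts to a linear isomorphism between the two column spaces.

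Composing these steps gives an affine bijection between $\mathrm{Col}(L)$ and $\mathrm{Col}(L')$ that maps $X(L)$ onto $X(L')$. Transporting it through the isomorphisms $A$ and $A'$ gives an element of the affine group of $\F_2^r$ relating the canonical column sets. Equality of ranks follows along the way. The tiling statement then follows from Lemma~\ref{lem:affine}.

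The main obstacle I expect is the bookkeeping in this last step: checking that the translation, or the map $P_{i_0'}$, really maps one column span onto the other, rather than merely mapping the set into some larger space. The identity $P_{i_0'}P_{i_0}=P_{i_0'}$ together with the fact that $0$ lies in each column set should settle this.
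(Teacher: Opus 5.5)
Your proof is correct, and it reaches the conclusion by a different mechanism from the paper's.

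The first reduction is the same in both. Sign changes cancel in the dephasing formula, and permutations only permute rows and columns. This leaves two dephasings of a single matrix at different positions.

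\emph{The paper's route.} It stays in $\F_2^r$. It writes $L=PC$ with $p_0=0$ and $c_0=0$, which is forced by the vanishing first row and column. The dephasing at $(i_0,j_0)$ is then $L'_{ij}=(p_i+p_{i_0})\cdot(c_j+c_{j_0})$. From this single identity it reads off both that $\rank L'=r$ and that a canonical column set of $L'$ is $M\{c_j+c_{j_0}\}$ for some $M\in\GL(r,\F_2)$.

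\emph{Your route.} You work in $\F_2^m$ with the intrinsic set $X(L)$ of columns of $L$ inside $\mathrm{Col}(L)$. This makes independence from the choice of basis automatic, since every canonical column set is linearly equivalent to $X(L)$. You then split the re-dephasing into two steps:
\begin{enumerate}
\item a column step, which is a translation by a vector $w$ lying in both sets and so preserves the common span;
\item a row step, given by the map $P_{i}(x)=x+x_{i}\one$.
\end{enumerate}
The map $P_i$ is singular on $\F_2^m$, with kernel $\{0,\one\}$. Your identities $P_{i_0'}P_{i_0}=P_{i_0'}$ and $P_{i_0}P_{i_0'}=P_{i_0}$ correctly show that it nevertheless restricts to an isomorphism between the two column spaces. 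This is exactly the point that needed care, and you handled it.

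\emph{Trade-offs.} The paper's factorization is shorter and treats both index changes at once. It gives the affine map explicitly as $x\mapsto M(x+c_{j_0})$. It also displays the row--column symmetry $p\leftrightarrow c$ that Remark~\ref{rem:transpose} relies on. Your version needs more bookkeeping about spans, but it avoids choosing a normalized factorization. It also makes transparent why changing the dephasing row is harmless only after restricting to the column space.
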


\begin{proof}
Let $L$ be the dephasing of $H$. The sign changes cancel in the defining product, so the dephasing of $H'$ coincides, up to permutations of rows and columns, with the dephasing $L'$ of $H$ at some $(i_0,j_0)$. A direct check of signs gives
\begin{equation}\label{eq:redephase}
L'_{ij}=L_{ij}+L_{ij_0}+L_{i_0j}+L_{i_0j_0}.
\end{equation}
Let $r=\rank L$ and write $L=PC$ with $P$ of size $m\times r$ and $C$ of size $r\times m$, both of rank $r$; denote the rows of $P$ by $p_i$ and the columns of $C$ by $c_j$, so $L_{ij}=p_i\cdot c_j$. Since row $0$ of $L$ vanishes and the $c_j$ span $\F_2^r$, we have $p_0=0$; similarly $c_0=0$. Then \eqref{eq:redephase} becomes
\[
L'_{ij}=(p_i+p_{i_0})\cdot(c_j+c_{j_0}).
\]
The points $c_j+c_{j_0}$ span $\F_2^r$: they include $c_0+c_{j_0}=c_{j_0}$ and hence every $c_j$. The same holds for the points $p_i+p_{i_0}$. Therefore $\rank L'=r$, and the rows of $L'$ are the vectors $(p_i+p_{i_0})^{\T}C'$, where $C'$ has columns $c_j+c_{j_0}$. A basis of the row space of $L'$ is $MC'$ for an invertible $r\times r$ matrix $M$, so the corresponding canonical column set is the image of $\{c_j+c_{j_0}\}$ under $M$. This set is affinely equivalent to the canonical column set $\{c_j\}$ of $L$, and the lemma follows since permutations of rows and columns do not change the set of columns.
\end{proof}

\begin{remark}\label{rem:transpose}
The dephasing of $H^{\T}$ is the transpose of the dephasing of $H$, so $H$ and $H^{\T}$ have the same dephased rank. In the notation of the proof above, a canonical column set of the dephasing of $H^{\T}$ is affinely equivalent to $\{p_i\}$, which in general is not affinely equivalent to $\{c_j\}$. Consequently, a classification modulo equivalence and transposition requires both orientations of each representative to be checked.
\end{remark}

\section{The finite computations}\label{sec:computation}

\subsection{Sources}
Hall \cite{Hal61} showed that there are exactly five equivalence classes of Hadamard matrices of order $16$; we use the representatives \texttt{had.16.0}, \dots, \texttt{had.16.4} of Sloane's library \cite{Slo}. Kharaghani and Tayfeh-Rezaie \cite{KT13} completed the classification of Hadamard matrices of order $32$, which has $13{,}710{,}027$ equivalence classes. McKay's database \cite{McK} contains representatives of all these classes. It also contains representatives of the $6{,}857{,}010$ \emph{extended} classes, that is, equivalence classes up to transposition, and of the $3{,}993$ classes that are invariant under transposition. We process the file of extended classes (see Appendix~\ref{app:data}).

Each record of the file consists of $32$ hexadecimal words, one for each row. The $(i,j)$ entry is $+1$ if bit $j$ of word $i$ is $1$ and $-1$ otherwise, where bit $0$ is the least significant bit. Records are numbered from $1$ in the order of the file.

\subsection{Order 16}
The dephased ranks of the five representatives of order $16$ are $4,5,6,7,7$; the class of rank four is the Sylvester class, whose canonical column set is all of $\F_2^4$. Table~\ref{tab:sixteen} lists, for each representative $H$, a canonical column set $E_C$ of the dephasing of $H$, in the order of the columns of $H$, and a tiling complement $T$. Here and below an integer $x$ denotes the vector of $\F_2^r$ whose $i$-th coordinate is bit $i$ of $x$.

\begin{table}[ht]
\centering
\caption{Canonical column sets and tiling complements for the five classes of order $16$.}\label{tab:sixteen}
\small
\begin{tabular}{@{}l c >{\raggedright\arraybackslash}p{0.52\textwidth} >{\raggedright\arraybackslash}p{0.18\textwidth}@{}}
\toprule
$H$ & $r$ & $E_C$ & $T$\\
\midrule
\texttt{had.16.0} & 4 & $(0,\allowbreak 1,\allowbreak 2,\allowbreak 3,\allowbreak 4,\allowbreak 5,\allowbreak 6,\allowbreak 7,\allowbreak 8,\allowbreak 9,\allowbreak 10,\allowbreak 11,\allowbreak 12,\allowbreak 13,\allowbreak 14,\allowbreak 15)$ & $\{0\}$ \\
\texttt{had.16.1} & 5 & $(0,\allowbreak 17,\allowbreak 2,\allowbreak 19,\allowbreak 4,\allowbreak 21,\allowbreak 22,\allowbreak 7,\allowbreak 24,\allowbreak 9,\allowbreak 26,\allowbreak 11,\allowbreak 28,\allowbreak 13,\allowbreak 14,\allowbreak 31)$ & $\{0,\allowbreak 25\}$ \\
\texttt{had.16.2} & 6 & $(0,\allowbreak 33,\allowbreak 18,\allowbreak 51,\allowbreak 4,\allowbreak 53,\allowbreak 38,\allowbreak 23,\allowbreak 56,\allowbreak 25,\allowbreak 42,\allowbreak 11,\allowbreak 60,\allowbreak 13,\allowbreak 30,\allowbreak 47)$ & $\{0,\allowbreak 41,\allowbreak 40,\allowbreak 57\}$ \\
\texttt{had.16.3} & 7 & $(0,\allowbreak 65,\allowbreak 34,\allowbreak 19,\allowbreak 100,\allowbreak 53,\allowbreak 118,\allowbreak 87,\allowbreak 120,\allowbreak 57,\allowbreak 90,\allowbreak 107,\allowbreak 28,\allowbreak 77,\allowbreak 14,\allowbreak 47)$ & $\{0,\allowbreak 76,\allowbreak 74,\allowbreak 85,\allowbreak 83,\allowbreak 103,\allowbreak 97,\allowbreak 126\}$ \\
\texttt{had.16.4} & 7 & $(0,\allowbreak 21,\allowbreak 38,\allowbreak 51,\allowbreak 56,\allowbreak 45,\allowbreak 30,\allowbreak 11,\allowbreak 64,\allowbreak 116,\allowbreak 122,\allowbreak 78,\allowbreak 105,\allowbreak 93,\allowbreak 83,\allowbreak 103)$ & $\{0,\allowbreak 63,\allowbreak 61,\allowbreak 47,\allowbreak 62,\allowbreak 57,\allowbreak 59,\allowbreak 17\}$ \\
\bottomrule
\end{tabular}
\end{table}

\subsection{Order 32: the rank scan}
For every record of the extended file we computed the dephased rank by Gaussian elimination over $\F_2$. Three independently written programs processed all $6{,}857{,}010$ records and selected the same sixteen records of rank at most seven; Table~\ref{tab:ranks} shows the full distribution.

\begin{table}[ht]
\centering
\caption{Dephased binary ranks of the $6{,}857{,}010$ extended classes of order $32$.}\label{tab:ranks}
\small
\begin{tabular}{lrrrrrr}
\toprule
rank & $5$ & $6$ & $7$ & $8$ & $9$ & $10$\\
classes & $1$ & $2$ & $13$ & $150$ & $1{,}874$ & $20{,}202$\\
\midrule
rank & $11$ & $12$ & $13$ & $14$ & $15$ & total\\
classes & $158{,}127$ & $786{,}379$ & $2{,}250{,}131$ & $2{,}753{,}834$ & $886{,}297$ & $6{,}857{,}010$\\
\bottomrule
\end{tabular}
\end{table}

\begin{table}[ht]
\centering
\caption{The sixteen extended classes of order $32$ with dephased rank at most seven, and tiling complements for both orientations. The index is the record number in the extended file.}\label{tab:witnesses}
\small
\begin{tabular}{rclll}
\toprule
index & rank & complement for $H$ & complement for $H^{\T}$\\
\midrule
25418 & 5 & $\{0\}$ & $\{0\}$ \\
25419 & 6 & $\{0,\allowbreak 56\}$ & $\{0,\allowbreak 60\}$ \\
25420 & 7 & $\{0,\allowbreak 120,\allowbreak 92,\allowbreak 60\}$ & $\{0,\allowbreak 124,\allowbreak 104,\allowbreak 60\}$ \\
25421 & 6 & $\{0,\allowbreak 60\}$ & $\{0,\allowbreak 60\}$ \\
25422 & 7 & $\{0,\allowbreak 124,\allowbreak 88,\allowbreak 52\}$ & $\{0,\allowbreak 116,\allowbreak 84,\allowbreak 48\}$ \\
25423 & 7 & $\{0,\allowbreak 124,\allowbreak 88,\allowbreak 52\}$ & $\{0,\allowbreak 124,\allowbreak 96,\allowbreak 60\}$ \\
25428 & 7 & $\{0,\allowbreak 124,\allowbreak 88,\allowbreak 52\}$ & $\{0,\allowbreak 126,\allowbreak 108,\allowbreak 90\}$ \\
25432 & 7 & $\{0,\allowbreak 124,\allowbreak 92,\allowbreak 60\}$ & $\{0,\allowbreak 126,\allowbreak 106,\allowbreak 60\}$ \\
25434 & 7 & $\{0,\allowbreak 124,\allowbreak 88,\allowbreak 52\}$ & $\{0,\allowbreak 126,\allowbreak 108,\allowbreak 90\}$ \\
25438 & 7 & $\{0,\allowbreak 124,\allowbreak 90,\allowbreak 54\}$ & $\{0,\allowbreak 126,\allowbreak 108,\allowbreak 58\}$ \\
25467 & 7 & $\{0,\allowbreak 126,\allowbreak 94,\allowbreak 56\}$ & $\{0,\allowbreak 126,\allowbreak 94,\allowbreak 56\}$ \\
25472 & 7 & $\{0,\allowbreak 126,\allowbreak 90,\allowbreak 54\}$ & $\{0,\allowbreak 126,\allowbreak 92,\allowbreak 58\}$ \\
25475 & 7 & $\{0,\allowbreak 126,\allowbreak 80,\allowbreak 62\}$ & $\{0,\allowbreak 126,\allowbreak 110,\allowbreak 56\}$ \\
25485 & 7 & $\{0,\allowbreak 124,\allowbreak 88,\allowbreak 52\}$ & $\{0,\allowbreak 124,\allowbreak 110,\allowbreak 90\}$ \\
25487 & 7 & $\{0,\allowbreak 124,\allowbreak 88,\allowbreak 52\}$ & $\{0,\allowbreak 124,\allowbreak 110,\allowbreak 90\}$ \\
44186 & 7 & $\{0,\allowbreak 96,\allowbreak 120,\allowbreak 24\}$ & $\{0,\allowbreak 125,\allowbreak 108,\allowbreak 49\}$ \\
\bottomrule
\end{tabular}
\end{table}

\subsection{Order 32: tiling certificates}
For each of the sixteen retained records and each orientation $H$ and $H^{\T}$ we computed a canonical column set and a tiling complement. A certificate consists of the rank $r$, a list $E_C$ of $32$ points of $\F_2^r$ in column order, and a set $T$. Two independently written checkers verify, using only integer arithmetic, that
\begin{enumerate}
\item the source matrix has entries $\pm1$ and satisfies $HH^{\T}=32I$;
\item its dephasing has rank $r$;
\item the $r\times32$ matrix whose columns are the points of $E_C$ has rank $r$ and the same row space as the dephasing, so that $E_C$ is a canonical column set of it;
\item $E_C$ has $32$ distinct points, $T$ has $2^r/32$ distinct points, and the $2^r$ sums $e+t$ are distinct.
\end{enumerate}
The same checks are applied to the five certificates of order $16$. All thirty-two certificates of order $32$ satisfy these conditions; Table~\ref{tab:witnesses} lists the complements, and Appendix~\ref{app:certificates} lists the matrices and the point sets.

\begin{proposition}\label{prop:smallrank}
Let $L$ be a dephased binary log-Hadamard matrix of order $16$, or of order $32$ and rank at most seven, and let $r=\rank L$. Then every canonical column set of $L$ tiles $\F_2^r$.
\end{proposition}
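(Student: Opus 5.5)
The plan is to reduce Proposition~\ref{prop:smallrank} to finitely many certificates via the classification results and Lemma~\ref{lem:equivalence}, and then read off the tilings from Tables~\ref{tab:sixteen} and~\ref{tab:witnesses}.

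\emph{Reduction to representatives.} Let $L$ be as in the statement and let $H=((-1)^{L_{ij}})$. Because $L$ is dephased, $L$ is the dephasing of $H$. Any canonical column set of $L$ differs from any other by an element of $\GL(r,\F_2)$, so it suffices to exhibit one that tiles $\F_2^r$.

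For order $16$, Hall's classification \cite{Hal61} shows that $H$ is equivalent to one of \texttt{had.16.0}, \dots, \texttt{had.16.4}. For order $32$, the classification of \cite{KT13}, in the form of McKay's file of extended classes, shows that $H$ or $H^{\T}$ is equivalent to some record $H_0$ of that file. Lemma~\ref{lem:equivalence} and Remark~\ref{rem:transpose} give $\rank$ of the dephasing of $H_0$ equal to $r\le7$. The rank scan of Table~\ref{tab:ranks} then forces $H_0$ to be one of the sixteen records of Table~\ref{tab:witnesses}.

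If $H$ is equivalent to $H_0^{\T}$ rather than $H_0$, I use that the dephasing of $H_0^{\T}$ is the transpose of the dephasing of $H_0$. This is why both orientations appear in the table. By Lemma~\ref{lem:equivalence}, the canonical column sets of $L$ are then affinely equivalent inside $\F_2^r$ to those of the dephasing of the relevant representative $H_1\in\{H_0,H_0^{\T}\}$ (or of the order-$16$ representative). Affine equivalence preserves tiling by Lemma~\ref{lem:affine}, so it suffices to treat $H_1$.

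\emph{Verification of certificates.} For each representative $H_1$, the listed data $(r,E_C,T)$ are checked against conditions (1)--(4) of the certificate description.
\begin{itemize}
\item Conditions (1)--(3) say that $H_1$ is Hadamard, that its dephasing has rank $r$, and that the rows of the $r\times m$ matrix with columns $E_C$ form a basis of its row space. Hence $E_C$ is a canonical column set in the sense of Definition~\ref{def:canonical}.
\item Condition (4) gives $|E_C|\,|T|=2^r$ with all sums $e+t$ distinct, so $E_C\oplus T=\F_2^r$.
\end{itemize}
For the Sylvester class $E_C=\F_2^4$ and $T=\{0\}$, which is trivial.

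\emph{Main obstacle.} The mathematical argument is routine once Lemma~\ref{lem:equivalence} is available. The real weight lies in the completeness of the rank scan: no extended class of rank at most seven may be missed. This rests on the correctness of the data file and on the independent computations reported above, which I would cite rather than reprove.

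The one subtle point to check carefully is the transposition bookkeeping. Extended classes identify $H$ with $H^{\T}$, but by Remark~\ref{rem:transpose} canonical column sets do not respect this identification. The proof must therefore make clear that every $L$ arises, up to equivalence, from exactly one of the two listed orientations, and that both orientations have certificates.
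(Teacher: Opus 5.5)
Your proposal is correct and follows the paper's proof essentially step for step: Hall's classification or the extended-class file gives a representative $R$ with $H$ equivalent to $R$ or $R^{\T}$; Lemma~\ref{lem:equivalence} with Remark~\ref{rem:transpose} transfers the rank bound and the affine equivalence of canonical column sets; the rank scan places $R$ in Table~\ref{tab:witnesses}; and the certificates for both orientations supply the tiling. One small correction to your closing remark: you only need every $L$ to arise from \emph{at least} one of the two orientations, since for transpose-invariant classes it arises from both, and this does not affect the argument.
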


\begin{proof}
Let $m$ be the order of $L$ and $H=((-1)^{L_{ij}})$. If $m=16$, $H$ is equivalent to one of the five representatives, whose canonical column sets tile by Table~\ref{tab:sixteen}; Lemma~\ref{lem:equivalence} transfers the tiling to $L$. If $m=32$, $H$ is equivalent to $R$ or to $R^{\T}$ for the representative $R$ of some extended class. By Lemma~\ref{lem:equivalence} and Remark~\ref{rem:transpose}, $R$ has dephased rank $r\le7$, so $R$ is one of the sixteen records of Table~\ref{tab:witnesses}. The certificates for $R$ and $R^{\T}$ and Lemma~\ref{lem:equivalence} again give the tiling.
\end{proof}

\section{Proof of the main theorem}\label{sec:proof}

\begin{proof}[Proof of Theorem~\ref{thm:main}]
Proposition~\ref{prop:forward} shows that every tile of $\F_2^7$ is spectral. Conversely, let $E\subseteq\F_2^7$ be spectral. By Corollary~\ref{cor:cardinality}, $|E|$ is a power of two. Sizes $1$, $2$ and $4$ follow from Lemma~\ref{lem:small}, size $8$ from Lemma~\ref{lem:eight}, and sizes $64$ and $128$ from Corollary~\ref{cor:cardinality}. Let $|E|\in\{16,32\}$, and let $B$ be a spectrum of $E$. After translating $E$ and $B$ we may assume $0\in E\cap B$. The matrix $L=(b\cdot e)$ of Lemma~\ref{lem:canonical} has rank at most seven, because it is the product of a $|B|\times7$ and a $7\times|E|$ matrix. By Proposition~\ref{prop:smallrank} its canonical column sets tile $\F_2^{\rank L}$, and Lemma~\ref{lem:canonical} shows that $E$ tiles $\F_2^7$.
\end{proof}

\begin{corollary}\label{cor:sixteen}
For every $d$, every sixteen-point spectral subset of $\F_2^d$ tiles $\F_2^d$.
\end{corollary}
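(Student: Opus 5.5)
The plan is to reduce to Proposition~\ref{prop:smallrank} for order $16$, which covers every rank. Let $E\subseteq\F_2^d$ be spectral with $|E|=16$, and let $B$ be a spectrum. After translating $E$ and $B$ independently, which preserves spectral pairs and tiling by Lemma~\ref{lem:affine}, we may assume $0\in E\cap B$. By Lemma~\ref{lem:canonical}, the matrix $L=(b\cdot e)_{b\in B,\,e\in E}$ is a dephased log-Hadamard matrix of order $16$. Put $r=\rank L$. Lemma~\ref{lem:canonical} then gives a surjective linear map $q\colon\F_2^d\to\F_2^r$ that maps $E$ bijectively onto a canonical column set of $L$.

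Proposition~\ref{prop:smallrank} has no rank hypothesis for order $16$. So every canonical column set of $L$ tiles $\F_2^r$. This rests on Hall's five classes, the certificates of Table~\ref{tab:sixteen}, and the transfer along equivalence in Lemma~\ref{lem:equivalence}. The last clause of Lemma~\ref{lem:canonical}, which uses the lifting in Lemma~\ref{lem:lift}, then shows that $E$ tiles $\F_2^d$.

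The only point to check is that nothing in this chain depends on $d=7$. Lemma~\ref{lem:canonical} is stated for arbitrary $d$. The rank $r$ is at most $15$, since $L$ is dephased of order $16$. In fact $r\le7$ by the table, because the dephased ranks of the order-$16$ classes are $4,5,6,7,7$. In any case the tiling statement lives in $\F_2^r$, independently of $d$.

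The main obstacle is this: Proposition~\ref{prop:smallrank} is phrased for an arbitrary dephased log-Hadamard matrix, while Table~\ref{tab:sixteen} records only one orientation per class. Since Hall's classification is up to equivalence without transposition, the five representatives already cover $H$ itself, so no transposition issue arises. The other thing to confirm is that the dephased rank of $L$ agrees with the $r$ of its representative, which Lemma~\ref{lem:equivalence} guarantees. With these checks the corollary follows immediately.
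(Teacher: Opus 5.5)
Your proposal is correct and follows essentially the same route as the paper: translate the spectral pair so that $0\in E\cap B$, then apply Lemma~\ref{lem:canonical} together with the order-$16$ case of Proposition~\ref{prop:smallrank}, which carries no rank or dimension hypothesis. Your remarks on the independence from $d$ and on the absence of a transposition issue for Hall's classes are accurate.
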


\begin{proof}
Apply Lemma~\ref{lem:canonical} and Proposition~\ref{prop:smallrank} to a translated spectral pair.
\end{proof}

\begin{remark}\label{rem:dim8}
In $\F_2^8$ only a few cases remain open after the arguments above. Tiles of size at least $32$ are spectral by Corollary~\ref{cor:large-tiles}, and sets of size at most eight are treated by Lemmas~\ref{lem:small} and~\ref{lem:eight}, so the tile-to-spectral implication there is open only for tiles of size $16$. By Remark~\ref{rem:smith-other} and Corollary~\ref{cor:sixteen}, the converse implication is open only for spectral sets of sizes $32$ and $64$. For size $32$ the method of this paper would require the tiling of the canonical column sets of the $166$ extended classes of order $32$ with dephased rank at most eight (Table~\ref{tab:ranks}), in both orientations; for size $64$ it would require information on Hadamard matrices of order $64$, for which we know of no complete classification.
\end{remark}

\appendix

\section{Data availability}\label{app:data}

The certificates of orders $16$ and $32$, the sixteen retained records of order $32$, and the programs used for the rank scan and the certificate checks are provided as ancillary files with the arXiv version of this paper, together with a README file that identifies the data used and describes how to reproduce the computations. The ancillary files are the version used for this paper; the same material is also available at \href{https://github.com/KnowledgeSense/math/tree/main/preprints/Fugledes-Conjecture-Holds-in-F2-7-October-6-2026}{\texttt{github.com/KnowledgeSense/math}}. The classification file of order $32$ is not redistributed; it is available from McKay's database \cite{McK}.

\section{Certificates for order 32}\label{app:certificates}

For each retained record we list its $32$ hexadecimal words (rows $0$ to $31$, eight per line), followed by certificates for $H$ and for $H^{\T}$, with the conventions of Section~\ref{sec:computation}. The point set $E_C$ is listed in column order, so that its $j$-th entry is the canonical point of column $j$; condition~(3) of Section~\ref{sec:computation} can thus be checked directly against the matrix.

\subsection*{Record 25418, rank 5}
\begin{Verbatim}[fontsize=\scriptsize,xleftmargin=1em]
FFFFFFFF FFFF0000 FF00FF00 FF0000FF F0F0F0F0 F0F00F0F F00FF00F F00F0FF0
CCCCCCCC CCCC3333 CC33CC33 CC3333CC C3C3C3C3 C3C33C3C C33CC33C C33C3CC3
AAAAAAAA AAAA5555 AA55AA55 AA5555AA A5A5A5A5 A5A55A5A A55AA55A A55A5AA5
99999999 99996666 99669966 99666699 96969696 96966969 96699669 96696996
\end{Verbatim}
\noindent{\small $H$:\quad $E_C=(0,\allowbreak 16,\allowbreak 8,\allowbreak 24,\allowbreak 4,\allowbreak 20,\allowbreak 12,\allowbreak 28,\allowbreak 2,\allowbreak 18,\allowbreak 10,\allowbreak 26,\allowbreak 6,\allowbreak 22,\allowbreak 14,\allowbreak 30,\allowbreak 31,\allowbreak 15,\allowbreak 23,\allowbreak 7,\allowbreak 27,\allowbreak 11,\allowbreak 19,\allowbreak 3,\allowbreak 29,\allowbreak 13,\allowbreak 21,\allowbreak 5,\allowbreak 25,\allowbreak 9,\allowbreak 17,\allowbreak 1)$;\quad $T=\{0\}$.}\par
\noindent{\small $H^{\mathsf T}$:\quad $E_C=(0,\allowbreak 16,\allowbreak 8,\allowbreak 24,\allowbreak 4,\allowbreak 20,\allowbreak 12,\allowbreak 28,\allowbreak 2,\allowbreak 18,\allowbreak 10,\allowbreak 26,\allowbreak 6,\allowbreak 22,\allowbreak 14,\allowbreak 30,\allowbreak 31,\allowbreak 15,\allowbreak 23,\allowbreak 7,\allowbreak 27,\allowbreak 11,\allowbreak 19,\allowbreak 3,\allowbreak 29,\allowbreak 13,\allowbreak 21,\allowbreak 5,\allowbreak 25,\allowbreak 9,\allowbreak 17,\allowbreak 1)$;\quad $T=\{0\}$.}\par

\subsection*{Record 25419, rank 6}
\begin{Verbatim}[fontsize=\scriptsize,xleftmargin=1em]
FFFFFFFF FFFF0000 FF00FF00 FF0000FF F0F0F0F0 F0F00F0F F00FF00F F00F0FF0
CCCCCCCC CCCC3333 CC33CC33 CC3333CC C3C3C3C3 C3C33C3C C33CC33C C33C3CC3
AAAAAAAA AAAA5555 AA55AA55 AA5555AA A5A5A5A5 A5A55A5A A55AA55A A55A5AA5
99999996 99996669 99669969 99666696 96969699 96966966 96699666 96696999
\end{Verbatim}
\noindent{\small $H$:\quad $E_C=(0,\allowbreak 16,\allowbreak 8,\allowbreak 24,\allowbreak 36,\allowbreak 52,\allowbreak 44,\allowbreak 60,\allowbreak 34,\allowbreak 50,\allowbreak 42,\allowbreak 58,\allowbreak 38,\allowbreak 54,\allowbreak 46,\allowbreak 62,\allowbreak 31,\allowbreak 15,\allowbreak 23,\allowbreak 7,\allowbreak 27,\allowbreak 11,\allowbreak 19,\allowbreak 3,\allowbreak 29,\allowbreak 13,\allowbreak 21,\allowbreak 5,\allowbreak 25,\allowbreak 9,\allowbreak 17,\allowbreak 1)$;\quad $T=\{0,\allowbreak 56\}$.}\par
\noindent{\small $H^{\mathsf T}$:\quad $E_C=(0,\allowbreak 32,\allowbreak 8,\allowbreak 40,\allowbreak 4,\allowbreak 36,\allowbreak 12,\allowbreak 44,\allowbreak 18,\allowbreak 50,\allowbreak 26,\allowbreak 58,\allowbreak 22,\allowbreak 54,\allowbreak 30,\allowbreak 62,\allowbreak 3,\allowbreak 35,\allowbreak 11,\allowbreak 43,\allowbreak 7,\allowbreak 39,\allowbreak 15,\allowbreak 47,\allowbreak 45,\allowbreak 13,\allowbreak 37,\allowbreak 5,\allowbreak 41,\allowbreak 9,\allowbreak 33,\allowbreak 1)$;\quad $T=\{0,\allowbreak 60\}$.}\par

\subsection*{Record 25420, rank 7}
\begin{Verbatim}[fontsize=\scriptsize,xleftmargin=1em]
FFFFFFFF FFFF0000 FF00FF00 FF0000FF F0F0F0F0 F0F00F0F F00FF00F F00F0FF0
CCCCCCCC CCCC3333 CC33CC33 CC3333CC C3C3C3C3 C3C33C3C C33CC33C C33C3CC3
AAAAAAAA AAAA5555 AA55AA55 AA5555AA A5A5A5A5 A5A55A5A A55AA55A A55A5AA5
99999996 99996669 99669969 99666696 96969666 96966999 96699699 96696966
\end{Verbatim}
\noindent{\small $H$:\quad $E_C=(0,\allowbreak 16,\allowbreak 8,\allowbreak 24,\allowbreak 100,\allowbreak 116,\allowbreak 108,\allowbreak 124,\allowbreak 34,\allowbreak 50,\allowbreak 42,\allowbreak 58,\allowbreak 38,\allowbreak 54,\allowbreak 46,\allowbreak 62,\allowbreak 31,\allowbreak 15,\allowbreak 23,\allowbreak 7,\allowbreak 27,\allowbreak 11,\allowbreak 19,\allowbreak 3,\allowbreak 29,\allowbreak 13,\allowbreak 21,\allowbreak 5,\allowbreak 25,\allowbreak 9,\allowbreak 17,\allowbreak 1)$;\quad $T=\{0,\allowbreak 120,\allowbreak 92,\allowbreak 60\}$.}\par
\noindent{\small $H^{\mathsf T}$:\quad $E_C=(0,\allowbreak 32,\allowbreak 8,\allowbreak 40,\allowbreak 68,\allowbreak 100,\allowbreak 76,\allowbreak 108,\allowbreak 82,\allowbreak 114,\allowbreak 90,\allowbreak 122,\allowbreak 22,\allowbreak 54,\allowbreak 30,\allowbreak 62,\allowbreak 107,\allowbreak 75,\allowbreak 99,\allowbreak 67,\allowbreak 47,\allowbreak 15,\allowbreak 39,\allowbreak 7,\allowbreak 5,\allowbreak 37,\allowbreak 13,\allowbreak 45,\allowbreak 41,\allowbreak 9,\allowbreak 33,\allowbreak 1)$;\quad $T=\{0,\allowbreak 124,\allowbreak 104,\allowbreak 60\}$.}\par

\subsection*{Record 25421, rank 6}
\begin{Verbatim}[fontsize=\scriptsize,xleftmargin=1em]
FFFFFFFF FFFF0000 FF00FF00 FF0000FF F0F0F0F0 F0F00F0F F00FF00F F00F0FF0
CCCCCCCC CCCC3333 CC33CC33 CC3333CC C3C3C3C3 C3C33C3C C33CC33C C33C3CC3
AAAAAAAA AAAA5555 AA55AA55 AA5555AA A5A5A5A5 A5A55A5A A55AA55A A55A5AA5
99999966 99996699 99669999 99666666 96969669 96966996 96699696 96696969
\end{Verbatim}
\noindent{\small $H$:\quad $E_C=(0,\allowbreak 16,\allowbreak 8,\allowbreak 24,\allowbreak 4,\allowbreak 20,\allowbreak 12,\allowbreak 28,\allowbreak 34,\allowbreak 50,\allowbreak 42,\allowbreak 58,\allowbreak 38,\allowbreak 54,\allowbreak 46,\allowbreak 62,\allowbreak 31,\allowbreak 15,\allowbreak 23,\allowbreak 7,\allowbreak 27,\allowbreak 11,\allowbreak 19,\allowbreak 3,\allowbreak 29,\allowbreak 13,\allowbreak 21,\allowbreak 5,\allowbreak 25,\allowbreak 9,\allowbreak 17,\allowbreak 1)$;\quad $T=\{0,\allowbreak 60\}$.}\par
\noindent{\small $H^{\mathsf T}$:\quad $E_C=(0,\allowbreak 16,\allowbreak 8,\allowbreak 24,\allowbreak 4,\allowbreak 20,\allowbreak 12,\allowbreak 28,\allowbreak 34,\allowbreak 50,\allowbreak 42,\allowbreak 58,\allowbreak 38,\allowbreak 54,\allowbreak 46,\allowbreak 62,\allowbreak 7,\allowbreak 23,\allowbreak 15,\allowbreak 31,\allowbreak 3,\allowbreak 19,\allowbreak 11,\allowbreak 27,\allowbreak 29,\allowbreak 13,\allowbreak 21,\allowbreak 5,\allowbreak 25,\allowbreak 9,\allowbreak 17,\allowbreak 1)$;\quad $T=\{0,\allowbreak 60\}$.}\par

\subsection*{Record 25422, rank 7}
\begin{Verbatim}[fontsize=\scriptsize,xleftmargin=1em]
FFFFFFFF FFFF0000 FF00FF00 FF0000FF F0F0F0F0 F0F00F0F F00FF00F F00F0FF0
CCCCCCCC CCCC3333 CC33CC33 CC3333CC C3C3C3C3 C3C33C3C C33CC33C C33C3CC3
AAAAAAAA AAAA5555 AA55AA55 AA5555AA A5A5A5A5 A5A55A5A A55A9999 A55A6666
9996A55A 99965AA5 99699966 99696699 96999696 96996969 96669669 96666996
\end{Verbatim}
\noindent{\small $H$:\quad $E_C=(0,\allowbreak 16,\allowbreak 104,\allowbreak 120,\allowbreak 100,\allowbreak 116,\allowbreak 12,\allowbreak 28,\allowbreak 98,\allowbreak 114,\allowbreak 10,\allowbreak 26,\allowbreak 6,\allowbreak 22,\allowbreak 110,\allowbreak 126,\allowbreak 95,\allowbreak 79,\allowbreak 87,\allowbreak 71,\allowbreak 27,\allowbreak 11,\allowbreak 19,\allowbreak 3,\allowbreak 29,\allowbreak 13,\allowbreak 21,\allowbreak 5,\allowbreak 25,\allowbreak 9,\allowbreak 17,\allowbreak 1)$;\quad $T=\{0,\allowbreak 124,\allowbreak 88,\allowbreak 52\}$.}\par
\noindent{\small $H^{\mathsf T}$:\quad $E_C=(0,\allowbreak 16,\allowbreak 8,\allowbreak 24,\allowbreak 4,\allowbreak 20,\allowbreak 12,\allowbreak 28,\allowbreak 98,\allowbreak 114,\allowbreak 106,\allowbreak 122,\allowbreak 102,\allowbreak 118,\allowbreak 110,\allowbreak 126,\allowbreak 47,\allowbreak 63,\allowbreak 39,\allowbreak 55,\allowbreak 43,\allowbreak 59,\allowbreak 61,\allowbreak 45,\allowbreak 3,\allowbreak 19,\allowbreak 21,\allowbreak 5,\allowbreak 25,\allowbreak 9,\allowbreak 17,\allowbreak 1)$;\quad $T=\{0,\allowbreak 116,\allowbreak 84,\allowbreak 48\}$.}\par

\subsection*{Record 25423, rank 7}
\begin{Verbatim}[fontsize=\scriptsize,xleftmargin=1em]
FFFFFFFF FFFF0000 FF00FF00 FF0000FF F0F0F0F0 F0F00F0F F00FF00F F00F0FF0
CCCCCCCC CCCC3333 CC33CC33 CC3333CC C3C3C3C3 C3C33C3C C33CC33C C33C3CC3
AAAAAAAA AAAA5555 AA55AA55 AA5555AA A5A5A5A5 A5A55A5A A55A9996 A55A6669
9996A55A 99965AA5 99699969 99696696 96999699 96996966 96669666 96666999
\end{Verbatim}
\noindent{\small $H$:\quad $E_C=(0,\allowbreak 16,\allowbreak 104,\allowbreak 120,\allowbreak 68,\allowbreak 84,\allowbreak 44,\allowbreak 60,\allowbreak 66,\allowbreak 82,\allowbreak 42,\allowbreak 58,\allowbreak 38,\allowbreak 54,\allowbreak 78,\allowbreak 94,\allowbreak 95,\allowbreak 79,\allowbreak 87,\allowbreak 71,\allowbreak 27,\allowbreak 11,\allowbreak 19,\allowbreak 3,\allowbreak 29,\allowbreak 13,\allowbreak 21,\allowbreak 5,\allowbreak 25,\allowbreak 9,\allowbreak 17,\allowbreak 1)$;\quad $T=\{0,\allowbreak 124,\allowbreak 88,\allowbreak 52\}$.}\par
\noindent{\small $H^{\mathsf T}$:\quad $E_C=(0,\allowbreak 32,\allowbreak 8,\allowbreak 40,\allowbreak 4,\allowbreak 36,\allowbreak 12,\allowbreak 44,\allowbreak 82,\allowbreak 114,\allowbreak 90,\allowbreak 122,\allowbreak 86,\allowbreak 118,\allowbreak 94,\allowbreak 126,\allowbreak 99,\allowbreak 67,\allowbreak 107,\allowbreak 75,\allowbreak 103,\allowbreak 71,\allowbreak 109,\allowbreak 77,\allowbreak 47,\allowbreak 15,\allowbreak 37,\allowbreak 5,\allowbreak 41,\allowbreak 9,\allowbreak 33,\allowbreak 1)$;\quad $T=\{0,\allowbreak 124,\allowbreak 96,\allowbreak 60\}$.}\par

\subsection*{Record 25428, rank 7}
\begin{Verbatim}[fontsize=\scriptsize,xleftmargin=1em]
FFFFFFFF FFFF0000 FF00FF00 FF0000FF F0F0F0F0 F0F00F0F F00FF00F F00F0FF0
CCCCCCCC CCCC3333 CC33CC33 CC3333CC C3C3C3C3 C3C33C3C C33CC33C C33C3CC3
AAAAAAAA AAAA5555 AA55AA55 AA5555AA A5A5A599 A5A55A66 A55AA566 A55A5A99
99999996 99996669 99669969 99666696 969696A5 9696695A 9669965A 966969A5
\end{Verbatim}
\noindent{\small $H$:\quad $E_C=(0,\allowbreak 16,\allowbreak 40,\allowbreak 56,\allowbreak 100,\allowbreak 116,\allowbreak 76,\allowbreak 92,\allowbreak 66,\allowbreak 82,\allowbreak 74,\allowbreak 90,\allowbreak 70,\allowbreak 86,\allowbreak 78,\allowbreak 94,\allowbreak 31,\allowbreak 15,\allowbreak 23,\allowbreak 7,\allowbreak 27,\allowbreak 11,\allowbreak 19,\allowbreak 3,\allowbreak 29,\allowbreak 13,\allowbreak 21,\allowbreak 5,\allowbreak 25,\allowbreak 9,\allowbreak 17,\allowbreak 1)$;\quad $T=\{0,\allowbreak 124,\allowbreak 88,\allowbreak 52\}$.}\par
\noindent{\small $H^{\mathsf T}$:\quad $E_C=(0,\allowbreak 64,\allowbreak 8,\allowbreak 72,\allowbreak 20,\allowbreak 84,\allowbreak 28,\allowbreak 92,\allowbreak 50,\allowbreak 114,\allowbreak 58,\allowbreak 122,\allowbreak 38,\allowbreak 102,\allowbreak 46,\allowbreak 110,\allowbreak 5,\allowbreak 69,\allowbreak 13,\allowbreak 77,\allowbreak 7,\allowbreak 71,\allowbreak 15,\allowbreak 79,\allowbreak 75,\allowbreak 11,\allowbreak 67,\allowbreak 3,\allowbreak 73,\allowbreak 9,\allowbreak 65,\allowbreak 1)$;\quad $T=\{0,\allowbreak 126,\allowbreak 108,\allowbreak 90\}$.}\par

\subsection*{Record 25432, rank 7}
\begin{Verbatim}[fontsize=\scriptsize,xleftmargin=1em]
FFFFFFFF FFFF0000 FF00FF00 FF0000FF F0F0F0F0 F0F00F0F F00FF00F F00F0FF0
CCCCCCCC CCCC3333 CC33CC33 CC3333CC C3C3C3C3 C3C33C3C C33CC33C C33C3CC3
AAAAAAAA AAAA5555 AA55AA55 AA5555AA A5A5A599 A5A55A66 A55AA566 A55A5A99
9999995A 999966A5 996699A5 9966665A 96969669 96966996 96699696 96696969
\end{Verbatim}
\noindent{\small $H$:\quad $E_C=(0,\allowbreak 16,\allowbreak 104,\allowbreak 120,\allowbreak 100,\allowbreak 116,\allowbreak 12,\allowbreak 28,\allowbreak 66,\allowbreak 82,\allowbreak 74,\allowbreak 90,\allowbreak 70,\allowbreak 86,\allowbreak 78,\allowbreak 94,\allowbreak 31,\allowbreak 15,\allowbreak 23,\allowbreak 7,\allowbreak 27,\allowbreak 11,\allowbreak 19,\allowbreak 3,\allowbreak 29,\allowbreak 13,\allowbreak 21,\allowbreak 5,\allowbreak 25,\allowbreak 9,\allowbreak 17,\allowbreak 1)$;\quad $T=\{0,\allowbreak 124,\allowbreak 92,\allowbreak 60\}$.}\par
\noindent{\small $H^{\mathsf T}$:\quad $E_C=(0,\allowbreak 32,\allowbreak 8,\allowbreak 40,\allowbreak 84,\allowbreak 116,\allowbreak 92,\allowbreak 124,\allowbreak 18,\allowbreak 50,\allowbreak 26,\allowbreak 58,\allowbreak 70,\allowbreak 102,\allowbreak 78,\allowbreak 110,\allowbreak 7,\allowbreak 39,\allowbreak 15,\allowbreak 47,\allowbreak 5,\allowbreak 37,\allowbreak 13,\allowbreak 45,\allowbreak 43,\allowbreak 11,\allowbreak 35,\allowbreak 3,\allowbreak 41,\allowbreak 9,\allowbreak 33,\allowbreak 1)$;\quad $T=\{0,\allowbreak 126,\allowbreak 106,\allowbreak 60\}$.}\par

\subsection*{Record 25434, rank 7}
\begin{Verbatim}[fontsize=\scriptsize,xleftmargin=1em]
FFFFFFFF FFFF0000 FF00FF00 FF0000FF F0F0F0F0 F0F00F0F F00FF00F F00F0FF0
CCCCCCCC CCCC3333 CC33CC33 CC3333CC C3C3C3C3 C3C33C3C C33CC33C C33C3CC3
AAAAAAAA AAAA5555 AA55AA55 AA5555AA A5A5A599 A5A55A66 A55AA566 A55A5A99
999996A5 9999695A 9966965A 996669A5 96969996 96966669 96699969 96696696
\end{Verbatim}
\noindent{\small $H$:\quad $E_C=(0,\allowbreak 16,\allowbreak 104,\allowbreak 120,\allowbreak 100,\allowbreak 116,\allowbreak 12,\allowbreak 28,\allowbreak 66,\allowbreak 82,\allowbreak 74,\allowbreak 90,\allowbreak 6,\allowbreak 22,\allowbreak 14,\allowbreak 30,\allowbreak 31,\allowbreak 15,\allowbreak 23,\allowbreak 7,\allowbreak 27,\allowbreak 11,\allowbreak 19,\allowbreak 3,\allowbreak 29,\allowbreak 13,\allowbreak 21,\allowbreak 5,\allowbreak 25,\allowbreak 9,\allowbreak 17,\allowbreak 1)$;\quad $T=\{0,\allowbreak 124,\allowbreak 88,\allowbreak 52\}$.}\par
\noindent{\small $H^{\mathsf T}$:\quad $E_C=(0,\allowbreak 64,\allowbreak 8,\allowbreak 72,\allowbreak 20,\allowbreak 84,\allowbreak 28,\allowbreak 92,\allowbreak 50,\allowbreak 114,\allowbreak 58,\allowbreak 122,\allowbreak 38,\allowbreak 102,\allowbreak 46,\allowbreak 110,\allowbreak 71,\allowbreak 7,\allowbreak 79,\allowbreak 15,\allowbreak 69,\allowbreak 5,\allowbreak 77,\allowbreak 13,\allowbreak 75,\allowbreak 11,\allowbreak 67,\allowbreak 3,\allowbreak 73,\allowbreak 9,\allowbreak 65,\allowbreak 1)$;\quad $T=\{0,\allowbreak 126,\allowbreak 108,\allowbreak 90\}$.}\par

\subsection*{Record 25438, rank 7}
\begin{Verbatim}[fontsize=\scriptsize,xleftmargin=1em]
FFFFFFFF FFFF0000 FF00FF00 FF0000FF F0F0F0F0 F0F00F0F F00FF00F F00F0FF0
CCCCCCCC CCCC3333 CC33CC33 CC3333CC C3C3C3C3 C3C33C3C C33CC33C C33C3CC3
AAAAAAAA AAAA5555 AA55AA55 AA5555AA A5A5A599 A5A55A66 A55AA566 A55A5A99
99999696 99996969 99669669 99666996 969699A5 9696665A 9669995A 966966A5
\end{Verbatim}
\noindent{\small $H$:\quad $E_C=(0,\allowbreak 16,\allowbreak 40,\allowbreak 56,\allowbreak 100,\allowbreak 116,\allowbreak 76,\allowbreak 92,\allowbreak 2,\allowbreak 18,\allowbreak 10,\allowbreak 26,\allowbreak 70,\allowbreak 86,\allowbreak 78,\allowbreak 94,\allowbreak 31,\allowbreak 15,\allowbreak 23,\allowbreak 7,\allowbreak 27,\allowbreak 11,\allowbreak 19,\allowbreak 3,\allowbreak 29,\allowbreak 13,\allowbreak 21,\allowbreak 5,\allowbreak 25,\allowbreak 9,\allowbreak 17,\allowbreak 1)$;\quad $T=\{0,\allowbreak 124,\allowbreak 90,\allowbreak 54\}$.}\par
\noindent{\small $H^{\mathsf T}$:\quad $E_C=(0,\allowbreak 32,\allowbreak 8,\allowbreak 40,\allowbreak 20,\allowbreak 52,\allowbreak 28,\allowbreak 60,\allowbreak 82,\allowbreak 114,\allowbreak 90,\allowbreak 122,\allowbreak 70,\allowbreak 102,\allowbreak 78,\allowbreak 110,\allowbreak 13,\allowbreak 45,\allowbreak 5,\allowbreak 37,\allowbreak 15,\allowbreak 47,\allowbreak 7,\allowbreak 39,\allowbreak 43,\allowbreak 11,\allowbreak 35,\allowbreak 3,\allowbreak 41,\allowbreak 9,\allowbreak 33,\allowbreak 1)$;\quad $T=\{0,\allowbreak 126,\allowbreak 108,\allowbreak 58\}$.}\par

\subsection*{Record 25467, rank 7}
\begin{Verbatim}[fontsize=\scriptsize,xleftmargin=1em]
FFFFFFFF FFFF0000 FF00FF00 FF0000FF F0F0F0F0 F0F00F0F F00FF00F F00F0FF0
CCCCCCCC CCCC3333 CC33CC33 CC3333CC C3C3C3C3 C3C33C3C C33CC33C C33C3CC3
AAAAAAAA AAAA5555 AA55AA55 AA5555AA A5A5A55A A5A55AA5 A55AA5A5 A55A5A5A
99999696 99996969 99669669 99666996 96969966 96966699 96699999 96696666
\end{Verbatim}
\noindent{\small $H$:\quad $E_C=(0,\allowbreak 16,\allowbreak 8,\allowbreak 24,\allowbreak 68,\allowbreak 84,\allowbreak 76,\allowbreak 92,\allowbreak 98,\allowbreak 114,\allowbreak 106,\allowbreak 122,\allowbreak 38,\allowbreak 54,\allowbreak 46,\allowbreak 62,\allowbreak 31,\allowbreak 15,\allowbreak 23,\allowbreak 7,\allowbreak 27,\allowbreak 11,\allowbreak 19,\allowbreak 3,\allowbreak 29,\allowbreak 13,\allowbreak 21,\allowbreak 5,\allowbreak 25,\allowbreak 9,\allowbreak 17,\allowbreak 1)$;\quad $T=\{0,\allowbreak 126,\allowbreak 94,\allowbreak 56\}$.}\par
\noindent{\small $H^{\mathsf T}$:\quad $E_C=(0,\allowbreak 16,\allowbreak 8,\allowbreak 24,\allowbreak 68,\allowbreak 84,\allowbreak 76,\allowbreak 92,\allowbreak 98,\allowbreak 114,\allowbreak 106,\allowbreak 122,\allowbreak 38,\allowbreak 54,\allowbreak 46,\allowbreak 62,\allowbreak 19,\allowbreak 3,\allowbreak 27,\allowbreak 11,\allowbreak 15,\allowbreak 31,\allowbreak 7,\allowbreak 23,\allowbreak 5,\allowbreak 21,\allowbreak 13,\allowbreak 29,\allowbreak 25,\allowbreak 9,\allowbreak 17,\allowbreak 1)$;\quad $T=\{0,\allowbreak 126,\allowbreak 94,\allowbreak 56\}$.}\par

\subsection*{Record 25472, rank 7}
\begin{Verbatim}[fontsize=\scriptsize,xleftmargin=1em]
FFFFFFFF FFFF0000 FF00FF00 FF0000FF F0F0F0F0 F0F00F0F F00FF00F F00F0FF0
CCCCCCCC CCCC3333 CC33CC33 CC3333CC C3C3C3C3 C3C33C3C C33CC33C C33C3CC3
AAAAAAAA AAAA5555 AA55AA55 AA5555AA A5A59999 A5A56666 A55A9966 A55A6699
99999696 99996969 99669669 99666996 9696A5A5 96965A5A 9669A55A 96695AA5
\end{Verbatim}
\noindent{\small $H$:\quad $E_C=(0,\allowbreak 16,\allowbreak 40,\allowbreak 56,\allowbreak 100,\allowbreak 116,\allowbreak 76,\allowbreak 92,\allowbreak 2,\allowbreak 18,\allowbreak 42,\allowbreak 58,\allowbreak 102,\allowbreak 118,\allowbreak 78,\allowbreak 94,\allowbreak 31,\allowbreak 15,\allowbreak 23,\allowbreak 7,\allowbreak 27,\allowbreak 11,\allowbreak 19,\allowbreak 3,\allowbreak 29,\allowbreak 13,\allowbreak 21,\allowbreak 5,\allowbreak 25,\allowbreak 9,\allowbreak 17,\allowbreak 1)$;\quad $T=\{0,\allowbreak 126,\allowbreak 90,\allowbreak 54\}$.}\par
\noindent{\small $H^{\mathsf T}$:\quad $E_C=(0,\allowbreak 16,\allowbreak 8,\allowbreak 24,\allowbreak 36,\allowbreak 52,\allowbreak 44,\allowbreak 60,\allowbreak 98,\allowbreak 114,\allowbreak 106,\allowbreak 122,\allowbreak 70,\allowbreak 86,\allowbreak 78,\allowbreak 94,\allowbreak 13,\allowbreak 29,\allowbreak 5,\allowbreak 21,\allowbreak 15,\allowbreak 31,\allowbreak 7,\allowbreak 23,\allowbreak 27,\allowbreak 11,\allowbreak 19,\allowbreak 3,\allowbreak 25,\allowbreak 9,\allowbreak 17,\allowbreak 1)$;\quad $T=\{0,\allowbreak 126,\allowbreak 92,\allowbreak 58\}$.}\par

\subsection*{Record 25475, rank 7}
\begin{Verbatim}[fontsize=\scriptsize,xleftmargin=1em]
FFFFFFFF FFFF0000 FF00FF00 FF0000FF F0F0F0F0 F0F00F0F F00FF00F F00F0FF0
CCCCCCCC CCCC3333 CC33CC33 CC3333CC C3C3C3C3 C3C33C3C C33CC33C C33C3CC3
AAAAAAAA AAAA5555 AA55AA55 AA5555AA A5A59999 A5A56666 A55A9966 A55A6699
9996A596 99965A69 9969A569 99695A96 969996A5 9699695A 9666965A 966669A5
\end{Verbatim}
\noindent{\small $H$:\quad $E_C=(0,\allowbreak 16,\allowbreak 40,\allowbreak 56,\allowbreak 100,\allowbreak 116,\allowbreak 76,\allowbreak 92,\allowbreak 66,\allowbreak 82,\allowbreak 42,\allowbreak 58,\allowbreak 38,\allowbreak 54,\allowbreak 78,\allowbreak 94,\allowbreak 95,\allowbreak 79,\allowbreak 87,\allowbreak 71,\allowbreak 27,\allowbreak 11,\allowbreak 19,\allowbreak 3,\allowbreak 29,\allowbreak 13,\allowbreak 21,\allowbreak 5,\allowbreak 25,\allowbreak 9,\allowbreak 17,\allowbreak 1)$;\quad $T=\{0,\allowbreak 126,\allowbreak 80,\allowbreak 62\}$.}\par
\noindent{\small $H^{\mathsf T}$:\quad $E_C=(0,\allowbreak 32,\allowbreak 8,\allowbreak 40,\allowbreak 84,\allowbreak 116,\allowbreak 92,\allowbreak 124,\allowbreak 66,\allowbreak 98,\allowbreak 74,\allowbreak 106,\allowbreak 22,\allowbreak 54,\allowbreak 30,\allowbreak 62,\allowbreak 37,\allowbreak 5,\allowbreak 45,\allowbreak 13,\allowbreak 39,\allowbreak 7,\allowbreak 47,\allowbreak 15,\allowbreak 43,\allowbreak 11,\allowbreak 35,\allowbreak 3,\allowbreak 41,\allowbreak 9,\allowbreak 33,\allowbreak 1)$;\quad $T=\{0,\allowbreak 126,\allowbreak 110,\allowbreak 56\}$.}\par

\subsection*{Record 25485, rank 7}
\begin{Verbatim}[fontsize=\scriptsize,xleftmargin=1em]
FFFFFFFF FFFF0000 FF00FF00 FF0000FF F0F0F0F0 F0F00F0F F00FF00F F00F0FF0
CCCCCCCC CCCC3333 CC33CC33 CC3333CC C3C3C3C3 C3C33C3C C33CC33C C33C3CC3
AAAAAAAA AAAA5555 AA55AA55 AA5555AA A5A59996 A5A56669 A55A9969 A55A6696
999996A5 9999695A 9966965A 996669A5 9696A599 96965A66 9669A566 96695A99
\end{Verbatim}
\noindent{\small $H$:\quad $E_C=(0,\allowbreak 16,\allowbreak 104,\allowbreak 120,\allowbreak 68,\allowbreak 84,\allowbreak 44,\allowbreak 60,\allowbreak 98,\allowbreak 114,\allowbreak 74,\allowbreak 90,\allowbreak 6,\allowbreak 22,\allowbreak 46,\allowbreak 62,\allowbreak 31,\allowbreak 15,\allowbreak 23,\allowbreak 7,\allowbreak 27,\allowbreak 11,\allowbreak 19,\allowbreak 3,\allowbreak 29,\allowbreak 13,\allowbreak 21,\allowbreak 5,\allowbreak 25,\allowbreak 9,\allowbreak 17,\allowbreak 1)$;\quad $T=\{0,\allowbreak 124,\allowbreak 88,\allowbreak 52\}$.}\par
\noindent{\small $H^{\mathsf T}$:\quad $E_C=(0,\allowbreak 64,\allowbreak 8,\allowbreak 72,\allowbreak 20,\allowbreak 84,\allowbreak 28,\allowbreak 92,\allowbreak 54,\allowbreak 118,\allowbreak 62,\allowbreak 126,\allowbreak 34,\allowbreak 98,\allowbreak 42,\allowbreak 106,\allowbreak 15,\allowbreak 79,\allowbreak 7,\allowbreak 71,\allowbreak 69,\allowbreak 5,\allowbreak 77,\allowbreak 13,\allowbreak 3,\allowbreak 67,\allowbreak 11,\allowbreak 75,\allowbreak 73,\allowbreak 9,\allowbreak 65,\allowbreak 1)$;\quad $T=\{0,\allowbreak 124,\allowbreak 110,\allowbreak 90\}$.}\par

\subsection*{Record 25487, rank 7}
\begin{Verbatim}[fontsize=\scriptsize,xleftmargin=1em]
FFFFFFFF FFFF0000 FF00FF00 FF0000FF F0F0F0F0 F0F00F0F F00FF00F F00F0FF0
CCCCCCCC CCCC3333 CC33CC33 CC3333CC C3C3C3C3 C3C33C3C C33CC33C C33C3CC3
AAAAAAAA AAAA5555 AA55AA55 AA5555AA A5A59996 A5A56669 A55A9969 A55A6696
9996A599 99965A66 9969A566 99695A99 969996A5 9699695A 9666965A 966669A5
\end{Verbatim}
\noindent{\small $H$:\quad $E_C=(0,\allowbreak 16,\allowbreak 40,\allowbreak 56,\allowbreak 4,\allowbreak 20,\allowbreak 44,\allowbreak 60,\allowbreak 34,\allowbreak 50,\allowbreak 74,\allowbreak 90,\allowbreak 70,\allowbreak 86,\allowbreak 46,\allowbreak 62,\allowbreak 95,\allowbreak 79,\allowbreak 87,\allowbreak 71,\allowbreak 27,\allowbreak 11,\allowbreak 19,\allowbreak 3,\allowbreak 29,\allowbreak 13,\allowbreak 21,\allowbreak 5,\allowbreak 25,\allowbreak 9,\allowbreak 17,\allowbreak 1)$;\quad $T=\{0,\allowbreak 124,\allowbreak 88,\allowbreak 52\}$.}\par
\noindent{\small $H^{\mathsf T}$:\quad $E_C=(0,\allowbreak 64,\allowbreak 8,\allowbreak 72,\allowbreak 20,\allowbreak 84,\allowbreak 28,\allowbreak 92,\allowbreak 54,\allowbreak 118,\allowbreak 62,\allowbreak 126,\allowbreak 34,\allowbreak 98,\allowbreak 42,\allowbreak 106,\allowbreak 69,\allowbreak 5,\allowbreak 77,\allowbreak 13,\allowbreak 15,\allowbreak 79,\allowbreak 7,\allowbreak 71,\allowbreak 3,\allowbreak 67,\allowbreak 11,\allowbreak 75,\allowbreak 73,\allowbreak 9,\allowbreak 65,\allowbreak 1)$;\quad $T=\{0,\allowbreak 124,\allowbreak 110,\allowbreak 90\}$.}\par

\subsection*{Record 44186, rank 7}
\begin{Verbatim}[fontsize=\scriptsize,xleftmargin=1em]
FFFFFFFF FFFF0000 FF00FF00 FF0000FF F0F0F0F0 F0F00F0F F00FF00F F00F0FF0
CCCCCCCC CCCC3333 CC33CC33 CC3333CC C3C3C3AA C3C33C55 C33CC355 C33C3CAA
AAAAAAC3 AAAA553C A995AA3C A99555C3 A665A5A5 A6655A5A A55AA55A A55A5AA5
9A599999 9A596666 99669966 99666699 96969696 96966969 95A99669 95A96996
\end{Verbatim}
\noindent{\small $H$:\quad $E_C=(0,\allowbreak 16,\allowbreak 56,\allowbreak 40,\allowbreak 52,\allowbreak 36,\allowbreak 12,\allowbreak 28,\allowbreak 114,\allowbreak 82,\allowbreak 122,\allowbreak 90,\allowbreak 118,\allowbreak 86,\allowbreak 126,\allowbreak 94,\allowbreak 111,\allowbreak 79,\allowbreak 103,\allowbreak 71,\allowbreak 107,\allowbreak 75,\allowbreak 35,\allowbreak 3,\allowbreak 109,\allowbreak 77,\allowbreak 37,\allowbreak 5,\allowbreak 41,\allowbreak 9,\allowbreak 33,\allowbreak 1)$;\quad $T=\{0,\allowbreak 96,\allowbreak 120,\allowbreak 24\}$.}\par
\noindent{\small $H^{\mathsf T}$:\quad $E_C=(0,\allowbreak 32,\allowbreak 72,\allowbreak 104,\allowbreak 68,\allowbreak 100,\allowbreak 12,\allowbreak 44,\allowbreak 66,\allowbreak 98,\allowbreak 10,\allowbreak 42,\allowbreak 23,\allowbreak 55,\allowbreak 95,\allowbreak 127,\allowbreak 62,\allowbreak 30,\allowbreak 54,\allowbreak 22,\allowbreak 43,\allowbreak 11,\allowbreak 35,\allowbreak 3,\allowbreak 45,\allowbreak 13,\allowbreak 37,\allowbreak 5,\allowbreak 41,\allowbreak 9,\allowbreak 33,\allowbreak 1)$;\quad $T=\{0,\allowbreak 125,\allowbreak 108,\allowbreak 49\}$.}\par

\section*{Use of AI-assisted tools}

AI-assisted tools were used in the research and preparation of this manuscript.

\end{document}